\def\e#1\e{\begin{equation}#1\end{equation}}
\def\ea#1\ea{\begin{align}#1\end{align}}
\def\eq#1{{\rm(\ref{#1})}}
\theoremstyle{plain}
\newtheorem{thm}{Theorem}[section]
\newtheorem{prop}[thm]{Proposition}
\newtheorem{lem}[thm]{Lemma}
\theoremstyle{definition}
\newtheorem{dfn}[thm]{Definition}
\newtheorem{rem}[thm]{Remark}
\def\Re{\mathop{\rm Re}}
\def\Im{\mathop{\rm Im}}
\def\Ric{\mathop{\rm Ric}}
\def\Ker{\mathop{\rm Ker}}
\def\U{\mathbin{\rm U}}
\def\u{\mathbin{\mathfrak u}}
\def\Vol{\mathop{\rm Vol}\nolimits}
\def\Exp{\mathop{\rm Exp}\nolimits}
\def\id{\mathop{\rm id}}
\def\ge{\geqslant}
\def\le{\leqslant}
\def\R{{\mathbin{\mathbb R}}}
\def\Z{{\mathbin{\mathbb Z}}}
\def\C{{\mathbin{\mathbb C}}}
\def\al{\alpha}
\def\na{\nabla}
\def\ga{\gamma}
\def\de{\delta}
\def\ep{\epsilon}
\def\ze{\zeta}
\def\om{\omega}
\def\vp{\varphi}
\def\th{\theta}
\def\up{\upsilon}
\def\Ga{\Gamma}
\def\De{\Delta}
\def\Om{\Omega}
\def\Up{\Upsilon}
\def\pd{\partial}
\def\ts{\textstyle}
\def\ds{\displaystyle}
\def\w{\wedge}
\def\iy{\infty}
\def\lt{\ltimes}
\def\ra{\rightarrow}
\def\longra{\longrightarrow}
\def\t{\times}
\def\ci{\circ}
\def\ti{\tilde}
\def\d{{\rm d}}
\def\f{\frac}
\def\Tn{T^{n}_{a_1,\ldots,\, a_n}}
\def\ot{\otimes}
\def\ha{{\ts\frac{1}{2}}}
\def\bs{\boldsymbol}
\def\epqij{e^{\sqrt{-1}(\th _p+\th _q-\th _i-\th _j)}}
\def\epqijm{e^{\sqrt{-1}(\th _p+\th _q-\th _i-\th _j+\th _m)}}
\def\epqijbm{e^{\sqrt{-1}(\th _p+\th _q-\th _i-\th _j-\th _m)}}
\def\eji{e^{\sqrt{-1}(\th_j-\th_i)}}
\def\eij{e^{\sqrt{-1}(\th_i-\th_j)}}
\def\ei{e^{\sqrt{-1}\th_i}}
\def\ej{e^{\sqrt{-1}\th_j}}
\def\eni{e^{-\sqrt{-1}\th_i}}
\def\enj{e^{-\sqrt{-1}\th_j}}
\def\Rjiii{R_{j\bar{i}i\bar{i}}(p)\eji}
\def\Riiiii{R_{i\bar{i}i\bar{i},i}(p)\ei}
\def\Riiiij{R_{i\bar{i}i\bar{i},j}(p)\ej}
\def\Riiiibi{R_{i\bar{i}i\bar{i},\bar{i}}(p)\eni}
\def\Riiiibj{R_{i\bar{i}i\bar{i},\bar{j}}(p)\enj}
\def\md#1{\vert #1 \vert}
\def\nm#1{\Vert #1 \Vert}
\def\bnm#1{\big\Vert #1 \big\Vert}
\def\ms#1{\vert #1 \vert^2}
\def\an#1{\langle #1 \rangle}
\def\pds#1{\frac{\pd}{\pd #1}}
\def\ban#1{\bigl\langle #1 \bigr\rangle}
\begin{document}
\title{The existence of Hamiltonian stationary Lagrangian
tori in K\"ahler manifolds of any dimension}
\author{ Yng-Ing Lee}

\date{}
\maketitle
\leftline{Department of Mathematics and Taida Institute
of Mathematical Sciences,}
\leftline{National Taiwan University,
Taipei 10617, Taiwan}
\leftline{National Center for Theoretical
Sciences, Taipei Office}
\centerline{email: yilee@math.ntu.edu.tw}
\begin{abstract}
Hamiltonian stationary Lagrangians are Lagrangian submanifolds
that are critical points of the volume functional under
Hamiltonian deformations. They can be considered as a
generalization of special Lagrangians or Lagrangian and minimal
submanifolds. In \cite{JLS}, Joyce, Schoen and the author show
that  given any compact rigid Hamiltonian stationary Lagrangian in
$\C^n$, one can always find a family of Hamiltonian stationary
Lagrangians of the same type in any compact symplectic manifolds
with a compatible metric. The advantage of this result is that it
holds in very general classes. But the disadvantage is that we do
not know where  these examples locate and examples in this family
might be far apart. In this paper, we derive a local condition on
K\"ahler manifolds which ensures the existence of one family of
Hamiltonian stationary Lagrangian tori near a point with given
frame satisfying the criterion. Butscher and Corvino posted a
condition in $n=2$ in \cite{BuCo}. But our condition appears to be
different from theirs.  The condition derived in this paper  not
only works for any dimension, but also for the Clifford torus case
which is not covered by their condition.

\end{abstract}

\section{Introduction}
\label{hst1} {\it Hamiltonian stationary\/} (or {\it H-minimal\/})
Lagrangians were defined and studied by Oh \cite{Oh1,Oh2} in a
K\"ahler manifold $(M, g)$. These objects have stationary volume
amongst Hamiltonian equivalent Lagrangians. The Euler--Lagrange
equation for a Hamiltonian stationary Lagrangian $L$ is
$\d^*\al_H=0$, where $H$ is the mean curvature vector on $L$,
$\al_H$ the 1-form on $L$ defined by $\al_H(\cdot)=\om(H,\cdot)$,
and $\d^*$ the Hodge dual of the exterior derivative~$\d$.

 Special
Lagrangians/ Lagrangian and minimal submanifolds are critical
points of the volume functional of all variations, and Hamiltonian
stationary Lagrangians can be considered as their generalizations.
Hamiltonian stationary Lagrangians are related models for
incompressible elasticity theory and are closely related to the
study of special Lagrangians/ Lagrangian and minimal submanifolds.
Although there are no compact special Lagrangians in $\C^n$, there
are compact Hamiltonian stationary Lagrangians in $\C^n$.  Oh
proves in \cite[Th.~IV]{Oh2} that for $a_1,\ldots,a_n>0$, the
torus $T^n_{a_1,\ldots,a_n}$ in $\C^n$ given by \e
T^n_{a_1,\ldots,a_n}=\bigl\{(z_1,\ldots,z_n)\in\C^n:\md{z_j}=a_j,
\; j=1,\ldots,n\bigr\} \label{hst1eq1} \e is a stable, rigid,
Hamiltonian stationary Lagrangian in $\C^n$. A Hamiltonian
stationary Lagrangian is called {\it stable\/} (or {\it
H-stable\/}) if the second variational formula of the volume
functional among Hamiltonian deformations is nonnegative. A
Hamiltonian stationary Lagrangian in $\C^n$ is called {\it
rigid\/} (or {\it H-rigid\/}) if the Jacobi vector fields for
Hamiltonian variations consist only those from the $\U(n)\lt\C^n$
actions on $\C^n$ (see \cite[\S 2.3]{JLS}). Other compact stable,
rigid, Hamiltonian stationary Lagrangians in $\C^n$ are given in
\cite{AmOh}.

H\'{e}lein and Romon found all Hamiltomian stationary Lagrangian
tori in $\C^2$ and $\C P^2$ via a Weierstrass-type representation
\cite{HR1, HR2}. But  there are very few results on the existence
of Hamiltonian stationary Lagrangians in
 general K\"ahler manifolds. In
\cite{JLS}, Joyce, Schoen and the author obtain families of
compact Hamiltonian stationary Lagrangians in every compact
symplectic manifold $(M, \om)$ with a compatible metric $g$. It in
particular contains the case of K\"ahler manifolds. The result is:
\medskip

\noindent{\bf Theorem \cite{JLS}} {\it Suppose that\/ $(M,\om)$ is
a compact symplectic\/ $2n$-manifold, $g$ is a Riemannian metric
on $M$ compatible with\/ $\om,$ and\/ $L$ is a compact,
Hamiltonian rigid, Hamiltonian stationary Lagrangian in\/ $\C^n$.
Then there exist compact, Hamiltonian stationary Lagrangians\/
$L'$ in $M$ which are diffeomorphic to\/ $L,$ such that\/ $L'$ is
contained in a small ball about some point $p\in M,$ and
identifying $M$ near\/ $p$ with\/ $\C^n$ near\/ $0$ in Darboux
geodesic normal coordinates, $L'$ is a small deformation of\/ $t
L$ for small\/ $t>0$. If\/ $L$ is also Hamiltonian stable, we can
take\/ $L'$ to be Hamiltonian stable.}
\medskip

The method used in \cite{JLS} is first to find Darboux coordinates
at each point which also admit a nice expression on the metric,
and then put a scaled compact Hamiltonian stationary Lagrangian
from $\C^n$ in the Darboux coordinates at each point. These
submanifolds are Lagrangian in $(M, \om, g)$, but not Hamiltonian
stationary yet. One then tries to perturb these approximate
examples in Hamiltonian equivalence class to Hamiltonian
stationary. This involves solving a highly nonlinear equation
whose linearized equation has approximate kernels, and thus it
cannot be done in general. In \cite{JLS}, we first solve the
equation perpendicular to the approximate kernels for examples
near any fixed point and then show that the problem of finding
Hamiltonian stationary Lagrangians, which are critical points of
the volume functional on an infinite dimensional space, can be
reduced to finding critical points of a smooth function on a
finite dimensional compact space when the model from $\C^n$ is in
addition Hamiltonian rigid and $M$ is compact. The existence will
follow from the simple fact that every continuous function has
critical points on a compact set.

The advantage of the above argument is that it only requires
compactness and works in very general classes. But the
disadvantage is that we do not know where the Hamiltonian
stationary Lagrangian locates in $(M, \om, g)$. As a consequence,
the examples obtained at each scaled size $t$ may be far apart for
different $t$. In this paper, we take a different approach in the
second step and resolve this deficit when $M$ is a K\"ahler
manifold and $L=\Tn$. More precisely, we show that

\medskip

\noindent{\bf Theorem \ref{hst4thm2}} {\it Suppose that \/
$(M,\om,g)$ is an n-dimensional K\"ahler manifold, and write $U$
for the $\U(n)$ frame bundle of $M$. The subgroup of diagonal
matrices $T^n\subset \U(n)$ acts on $U$. For any given
$a_i>0,\;i=1,\ldots, n$, define $F_{a_1,\ldots,a_n}: U/T^n\ra \R$
by \/
$F_{a_1,\ldots,a_n}(p,[\up])=\sum_{i=1}^{n}a_i^2R_{i\bar{i}i\bar{i}}(p)$,
where $p\in M, \; \up \in \U(n),$ and the holomorphic sectional
curvature $R_{i\bar{i}i\bar{i}}(p)$ is computed w.r.t. the unitary
frame $\up$ at $p$ whose value is clearly independent of the
representative $\up$ of $[\up]$. Assume that $(p_0,[\up_0])\in
U/T^n$ is a non-degenerate critical point of $F_{a_1,\ldots,a_n},$
then for $t$ small there exist a smooth family $(p(t),[\up(t)])
\in U/T^n$ satisfying $(p(0),[\up(0)])=(p_0,[\up_0])$ and a smooth
family of embedded Hamiltomian stationary Lagrangian tori with
radii $(ta_1,\ldots, ta_n)$ center at $p(t)$ which are invariant
under $T^n$ action and posited w.r.t any representative of
$[\up(t)]$. Moreover, the distance between $(p(t),[\up(t)])$ and
$(p_0,[\up_0])$ in $U/T^n$ is bounded by $ct^2$. The family of
embedded Hamiltomian stationary Lagrangian tori  do not intersect
each other when $t$ is small.}

\medskip

The proof of the theorem is along the same line as in \cite{JLS}
with the following differences:
\begin{itemize}
\setlength{\itemsep}{0pt} \setlength{\parsep}{0pt} \item On a
K\"ahler manifold, we obtain Darboux coordinates with  better
expressions on the metric. And when $L=\Tn$, we can compute the
leading terms in related estimates in explicit forms. \item In the
last step, instead of using compactness to show the existence, we
analyze directly the conditions we need to perturb  approximate
examples to Hamiltonian stationary. This is done by deriving
explicit expressions up to some orders in all related estimates.
Because we do not use  compactness condition, the result also hold
for noncompact K\"ahler manifolds.
\end{itemize}

 Our result is an analogue to the constant mean curvature
(CMC) hypersurface case in a Riemannian manifold $M$. Ye in
\cite{Ye} showed that near a non-degenerate critical point $p$ of
the scalar curvature function on $M$, there exist CMC sphere
foliation near $p$ . The problem of finding a corresponding
condition for Hamiltomian stationary Lagrangian tori on a K\"ahler
manifold is proposed by Schoen, and is  the starting point of our
project in this direction including \cite{JLS}. Butscher and
Corvino proposed a different condition in
  \cite{BuCo} for $n=2$, which
  is the non-degenerate critical point
  of the function $G_{a_1,a_2}(p,\up)=a_1^2R_{1\bar 1}^{\C}(p)+a_2^2R_{2\bar 2}^{\C}(p)$
  on  $U/T^2$.  Here $R_{1\bar 1}^{\C}$ and
  $R_{2\bar 2}^{\C}$ are the complex Ricci curvature.  Note that
  $G_{a_1,a_2}$
   will be a multiple
  of the scalar
  curvature when $a_1=a_2$. It is independent
  of the frame, and thus there won't be any non-degenerate critical point
   of $G_{a_1,a_2}$ in this case. In contrast to that,
  our condition not only works
for any dimension, but also cover the Clifford torus (i.e., with
same radii) case. Because the dimension does not match, the family
of Hamiltomian stationary Lagrangian tori with radii
$(ta_1,\ldots, ta_n)$ won't form a foliation.

This paper is organized as follows. In \S\ref{hst2} we give basic
definitions and derive new Darboux coordinates which will play an
essential role in the paper.  Some important and involving
estimates are given in \S3. Section 4 consist of the set up for
the perturbation and the proof of the main theorem. A different
proof which is more close to our approach in \cite{JLS} is
presented in the last section. This more geometrical simple proof
also give another justification for the  computation in \S3. The
first proof has its own interests which demonstrate the general
scheme of the perturbation method. So we  present both proofs in
the paper.
\medskip

\noindent {\bf Acknowledgements:} I  benefit a lot from the joint
project  with Joyce and Schoen in \cite{JLS}, and the set up of
this paper very much follows that in \cite{JLS}. I would like to
express my special gratitude to both of them.  Brendle's comments
in my talk in Columbia university remind me  to revisit one of my
earlier approaches and leads to the different proof in the last
section. I am indebted to him for his enlightening comments, and
also to Joyce for his many helpful comments in a earlier version
of the paper.
\section{Notation and Darboux coordinates}
\label{hst2}
\subsection{Lagrangian and Hamiltonian stationary} \label{hst21}
 We will assume $(M, \om,g)$ to be K\"ahler through this paper, and refer to
 \S 2 in \cite{JLS} for more
 detailed discussions on the background material.
\begin{dfn}  A
submanifold $L$ in $(M, \om)$ is called {\it Lagrangian} if $\dim
L=n=\ha\dim M$ and $\om\vert_L\equiv 0$. It follows that the image
of the tangent bundle $TL$ under the complex structure $J$ is
equal to the normal bundle $T^{\perp}L$. \label{hst2def1}
\end{dfn}

Let $F:M\ra\R$ be a smooth function on $M$. The {\it Hamiltonian
vector field\/} $v_F$ of $F$ is the unique vector field satisfying
$v_F\cdot\om=\d F$. The Lie derivative satisfies ${\cal
L}_{v_F}\om=v_F\cdot\d\om +\d(v_F\cdot\om)=0$, so the trajectory
of $v_F$ gives a 1-parameter family of diffeomorphisms  $\Exp
(sv_F):M\ra M$ for $s\in\R$  which preserve $\om$. It is called
the {\it Hamiltonian flow\/} of $F$. If $L$ is a compact
Lagrangian in $M$ then $\Exp(sv_F)L$ is also a compact Lagrangian
in~$M$.

\begin{dfn} A compact Lagrangian submanifold $L$ in $(M, \om,g)$ is called
{\it Hamiltonian stationary}, or {\it H-minimal}, if it is a
critical point of the volume functional among Hamiltonian
deformations. That is, $L$ is Hamiltonian stationary if \e
\ts\frac{\d}{\d s}\Vol_g\bigl(\Exp(sv_F)L\bigr)\big\vert_{s=0}=0
\label{hst21eq2} \e for all smooth $F:M\ra\R$. By Oh
\cite[Th.~I]{Oh2}, \eq{hst21eq2} is equivalent to the
Euler--Lagrange equation \e \d^*\al_H=0, \label{hst21eq3} \e where
$H$ is the mean curvature vector of $L$, and $\al_H=
(H\cdot\om)\vert_L$ is the associated 1-form of $H$ on $L$, and
$\d^*$ is the Hodge dual of the exterior derivative $\d$ on $L$,
computed using the metric $h=g\vert_L$. \label{hst2def2}
\end{dfn}

When $(M,\om, g)$ is a Calabi-Yau manifold, one can choose a
holomorphic $(n,0)$-form $\Om$ on $M$ with $\nabla\Om=0$,
normalized so that
\begin{equation*}
\om^n/n!=(-1)^{n(n-1)/2}(i/2)^n\Om\w\bar\Om.
\end{equation*}
If $L$ is an oriented Lagrangian in $M$, then $\Om\vert_L\equiv
e^{i\th}\d V_{L}$, where $\d V_{L}$ is the induced volume form
from the metric $g$. It defines {\it Lagrangian angle\/} $\th: L
\ra \R/2\pi\Z$ on $L$. The submanifold $L$ is called {\it special
Lagrangian\/} if $\th$ is constant. On a Hamiltonian stationary
Lagrangian, the Lagrangian angle $\th$ is harmonic. If moreover,
the image of $\th$ lies in $\R$ (and here $L$ is compact),  then
the Hamiltonian stationary Lagrangian $L$ is indeed special
Lagrangian since every harmonic function on a compact manifold
must be constant.

At a  Hamiltonian stationary Lagrangian $L$, one can compute to
find that \e \ts\frac{\d^2}{\d
s^2}\Vol_g\bigl(\Exp(sv_F)L\bigr)\big\vert_{s=0} =\ban{{\cal
L}f,f}_{L^2(L)}, \label{hst21eq5} \e where $f=F\vert_L$ and \e
{\cal L}f= \De^2 f+\d^*\al_{\Ric^{\perp}(J\,\na f)}-2\d^*
\al_{B(JH,\na f)} -JH(JH(f)). \label{hst21eq6} \e In
\eq{hst21eq6}, $\De f=\d^*\d f$ is the positive Laplacian on $L$,
$B(\cdot,\cdot)$ is the second fundamental form on $L,$ and
$\Ric^{\perp}(v)$ is a normal vector field along $L$ defined by
$\Ric(v,w)=\an{\Ric^{\perp}(v),w}$ for any normal vector $w$. Note
also that by the Lagrangian condition $JH$ is tangent to $L$.

Given a smooth function $f$ on a Lagrangian $L$, we can extend it
to a smooth function $F$ on $M$ and consider $L_s=\Exp (sv_F)L$
whose mean curvature vector is denoted by $H_s$. One can derive
the linearized operator of $-\d^*\al_{H_f}=-\d^*\al_{H_1}$ and
obtain that \e -\f{\d}{\d s}\,(\d^*\al_{H_s})\big\vert_{s=0}={\cal
L}f. \label{hst21eq7}\e Here $L$ does not need to be Hamiltonian
stationary.

 Oh proves in \cite[Th.~IV]{Oh2} that the torus $\Tn$ in $\C^n$ given
by \eq{hst1eq1} is Hamiltonian stationary with \eq{hst21eq5}
nonnegative definite (Hamiltonian stable), and $\Ker{\cal L}$ at
$\Tn$ consist of functions of the following form
\begin{equation*}
Q(z_1,\ldots,z_n)=\ts a+\sum_{j=1}^n(b_jz_j+\bar{b}_j\bar{z}_j)
+\sum_{j \ne k}^nc_{jk}z_j\bar{z}_k
\end{equation*}
restricted on $\Tn$, where $ a \in \R, \, b_j,\, c_{jk}\in \C,$
and\/ $c_{jk}=\bar c_{kj}$ (Hamiltonian rigid, see \cite{JLS}). If
we write $\Tn$ in polar coordinates \e
\Tn=\bigl\{(a_1e^{\sqrt{-1}\th_1},\ldots,a_ne^{\sqrt{-1}\th_n})\in\C^n:\th_i
\in [0,2\pi), i=1,\ldots,n\bigr\} \label{hst21eq8}, \e then $
\Ker{\cal L}$ is spanned by \e 1,\;\cos\th_i,\; \sin\th_i,\;
\cos(\th_i-\th_j),\; \sin(\th_i-\th_j)\label{hst21eq9} \e  for
\/$i,\, j=1,\ldots,n$ and \/$i\ne j$ \cite{Oh2}.

\subsection{New Darboux coordinates}
\label{hst22}

The convention  for curvature operator used in this paper is
$$R(X,Y)Z=\na _{X}\na _Y Z-\na _{Y}\na _X Z -\na _{[X,Y]} Z,$$
and
$$R_{ijkl}=\an{R(\pds{x^i}, \pds{x^j})\pds{x^k}, \pds{x^l}}.$$
We use the same notion for complex curvature operator and denote
$$R_{i\bar{j}k\bar{l}}=\an{R(\pds{z^i}, \pds{\bar{z}^j})\pds{z^k}, \pds{\bar{z}^l}}.$$
 The basic definitions and properties for
curvature of  K\"ahler metrics can be found in \cite{Tian}.

Denote $\C^n$ with complex coordinates $(z_1,\ldots,z_n)$, where
$z_j=x_j+\sqrt{-1}y_j$.  Define the standard Euclidean metric
$g_0$, K\"ahler form $\om_0$, and complex structure $J_0$ on
$\C^n$ by \begin{equation*}
\begin{split}
g_0&=\ts\sum_{j=1}^n\ms{\d z_j}=\sum_{j=1}^n(\d x_j^2+\d y_j^2),\\
\om_0&=\ts\frac{\sqrt{-1}}{2}\sum_{j=1}^n\d
z_j\w\d\bar{z}_j=\sum_{j=1}^n\d
x_j\w\d y_j,\quad\text{and}\\
J_0&=\ts\sum_{j=1}^n\bigl(\sqrt{-1}\d z_j \ot \frac{\pd}{\pd
z_j}-\sqrt{-1}\d\bar{z}_j\ot \frac{\pd}{\pd\bar{z}_j}\bigr)
=\sum_{j=1}^n\bigl(\d x_j\ot\frac{\pd}{\pd y_j}- \d
y_j\ot\frac{\pd}{\pd x_j}\bigr),
\end{split}
\end{equation*}
 noting that $\d z_j=\d
x_j+\sqrt{-1}\d y_j$ and $\frac{\pd}{\pd
z_j}=\ha\bigl(\frac{\pd}{\pd x_j}-\sqrt{-1}\frac{\pd}{\pd
y_j}\bigr)$.

Darboux's Theorem says that we can find local coordinates near any
point on a symplectic manifold such that the symplectic structure
is like $\om_0$ in $\C^n$ in these coordinates, which will be
called Darboux coordinates. Because we need a good control on the
metric as well, we will redo the argument to find better  Darboux
coordinates.
  We first start with holomorphic normal
 coordinates at points in a K\"ahler manifold, and proceed as in \cite[Prop.~3.2]{JLS}
 to convert them into Darboux coordinates. To meet our need,
  we will
 not only  derive the leading
 coefficients of the metric in this new Darboux coordinates, but also
 the coefficients of the next order. More precisely, we
have
\begin{prop} Let\/ $(M,\om, g)$ be a compact  n-dimensional K\"ahler manifold
with associate K\"ahler form $\om$ and let\/ $U$ be the $\U(n)$
frame bundle of\/ $M$. Then for small\/ $\ep>0$ we can choose a
family of embeddings $\Up_{p,\up}:B_\ep\ra M$ depending smoothly
on $(p,\up)\in U,$ where $B_\ep$ is the ball of radius $\ep$
about\/ $0$ in $\C^n,$ such that for all\/ $(p,\up)\in U$ we have:
\begin{itemize}
\setlength{\itemsep}{0pt} \setlength{\parsep}{0pt} \item[{\rm(i)}]
$\Up_{p,\up}(0)=p$ and\/ $\d\Up_{p,\up}\vert_0= \up:\C^n\ra T_pM;$
\item[{\rm(ii)}] $\Up_{p,\up\ci\ga}\equiv\Up_{p,\up}\ci\ga$ for
all\/ $\ga\in\U(n);$ \item[{\rm(iii)}]
$\Up_{p,\up}^*(\om)=\om_0=\ts\frac{\sqrt{-1}}{2} \sum_{j=1}^n\d
z_j\w\d\bar{z}_j;$ and \item[{\rm(iv)}] $\Up_{p,\up}^*(g)=\ts
g_0+\frac{1}{2} \sum\Re\bigl(R_{i\bar{j}k\bar{l}}(p)z^i z^k \d\bar
z^{j}\d\bar z^{l}\bigr)+\frac{1}{5}
\sum\Re\bigl(R_{i\bar{j}k\bar{l},m}(p)z^i z^k z^m\d\bar
z^{j}\d\bar z^{l}\bigr)+\frac{2}{5}\sum
\Re\bigl(R_{i\bar{j}k\bar{l},\bar{m}}(p)z^i z^k \bar z^m\d\bar
z^{j}\d\bar z^{l}\bigr)+O\bigl(\md{\bs z}^4\bigr),$ where
$g_0=\ts\sum_{j=1}^n\ms{\d z_j}$.

\end{itemize}
 \label{hst2prop1}
\end{prop}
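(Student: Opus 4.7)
The plan is to start with holomorphic K\"ahler normal coordinates at $(p,\up)$ and then apply a Moser--Weinstein type argument to convert them into Darboux coordinates, while carefully tracking the Taylor expansion of the metric through cubic order.

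First I would invoke the standard construction of K\"ahler normal coordinates: for each $(p,\up)\in U$ there is an embedding $\ti\Up_{p,\up}\colon B_\ep \to M$ sending $0$ to $p$ with differential $\up$, depending smoothly and $\U(n)$-equivariantly on $(p,\up)$, in which the K\"ahler potential has the normalized form $\phi(w,\bar w)=\ms{\bs w} + \sum_{k,l\ge 2}\phi_{k,l}(w,\bar w)$ with $\phi_{k,l}$ of bidegree $(k,l)$ and coefficients given by explicit polynomials in $R_{i\bar j k\bar l}(p)$ and its covariant derivatives. This yields a concrete expansion
\[
\ti\Up_{p,\up}^*g = g_0 - R_{i\bar j k\bar l}(p)w^k\bar w^l \,\d w^i \,\d\bar w^j + G^{(3)}(w,\bar w) + O(\md{\bs w}^4),
\]
with $G^{(3)}$ an explicit cubic polynomial in $(w,\bar w)$ built from $\na R$, together with a K\"ahler form $\ti\Up_{p,\up}^*\om$ differing from $\om_0$ starting at quadratic order.

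Next I would apply Moser's trick. Since $\ti\Up_{p,\up}^*\om - \om_0 = \frac{\sqrt{-1}}{2}\pd\bar\pd(\phi-\ms{\bs w})$, I choose the real primitive $\al = \frac{\sqrt{-1}}{4}(\bar\pd - \pd)(\phi-\ms{\bs w})$, which vanishes to third order at $0$. Forming $\om_t = \om_0 + t(\ti\Up_{p,\up}^*\om - \om_0)$ and solving $\io_{X_t}\om_t = -\al$ produces a time-dependent vector field $X_t = O(\md{\bs w}^3)$; let $\Psi$ be its time-$1$ flow. Then $\Psi(0)=0$, $\d\Psi|_0=\id$, and $\Psi^*(\ti\Up_{p,\up}^*\om) = \om_0$. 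Setting $\Up_{p,\up}:=\ti\Up_{p,\up}\ci\Psi$ establishes (i)--(iii); equivariance under $\U(n)$ follows because $\al$ and $X_t$ are constructed intrinsically from $\om-\om_0$, which is $\U(n)$-covariant in the normal coordinates.

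For (iv), expand $\Psi(w)=w+X^{(3)}(w)+X^{(4)}(w)+O(\md{\bs w}^5)$, where $X^{(k)}$ is homogeneous of degree $k$ and is determined iteratively from $\io_{X_t}\om_t=-\al$. Using $\Up_{p,\up}^*g=(\ti\Up_{p,\up}^*g)(\Psi(w))\,(\d\Psi)^{T}(\d\Psi)$ and expanding through cubic order one obtains
\[
\Up_{p,\up}^*g = \ti\Up_{p,\up}^*g(w) + {\cal L}_{X^{(3)}}g_0 + {\cal L}_{X^{(4)}}g_0 + O(\md{\bs w}^4).
\]
The mixed $\d w\,\d\bar w$ quadratic part of $\ti\Up^*g$ combined with ${\cal L}_{X^{(3)}}g_0$ rearranges into the symmetric $(2,0)+(0,2)$ expression $\f{1}{2}\sum \Re\bigl(R_{i\bar j k\bar l}(p)z^iz^k \d\bar z^j\,\d\bar z^l\bigr)$ listed in (iv), and the analogous rearrangement at one higher degree produces the two $\na R$ terms with coefficients $\f{1}{5}$ and $\f{2}{5}$.

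The main obstacle is the polynomial bookkeeping required to arrive at these exact constants. One must solve the Moser equation $\io_{X_t}\om_t=-\al$ order-by-order to extract $X^{(3)}$ and $X^{(4)}$ in terms of $R$ and $\na R$ at $p$, then combine them with the corresponding terms in the K\"ahler normal expansion of $g$. Since the target splits into distinct bidegrees in $(w,\bar w)$, the computation must be carried out bidegree-by-bidegree, and the delicate check is that only the tensor combinations displayed in (iv) survive, with exactly the stated numerical coefficients.
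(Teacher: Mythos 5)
Your proposal follows essentially the same route as the paper: start from holomorphic normal coordinates with the curvature expansion of $g$ through cubic order, apply Moser's trick to a primitive of $\om_0-\ti\Up_{p,\up}^*\om$ vanishing to third order, and push the metric through the resulting time-one flow, tracking terms bidegree by bidegree. The bookkeeping you defer is exactly what the paper carries out — it writes the primitive $\ze_{p,\up}$ and the degree-$\le 4$ expansion of the flow map explicitly, from which the coefficients $\f{1}{2}$, $\f{1}{5}$, $\f{2}{5}$ follow by counting the $z$ versus $\bar z$ factors paired with $\d z^i$ and $\d\bar z^j$.
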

\begin{proof}
Given $(p,\up)\in U,$ we can find holomorphic coordinates that is
an embedding $\Up_{p,\up}':B_{\ep'}\ra M$ satisfying (i),(ii), and
\ea
(\Up_{p,\up}')^*(g)=&g_0-\sum_{i,j,k,l}R_{i\bar{j}k\bar{l}}(p)z^k
\bar{z}^l\d z^{i}\d\bar{z}^{j}+O(\md{\bs
z}^4)\nonumber\\&\quad-\f{1}{2}\sum_{i,j,k,l,m}\bigl(R_{i\bar{j}k\bar{l},m}(p)z^k
\bar{z}^lz^m+R_{i\bar{j}k\bar{l},\bar{m}}(p)z^k
\bar{z}^l\bar{z}^m\bigr)\d z^{i}\d\bar{z}^{j}. \nonumber
 \ea
 The pull back K\"{a}hler form has similar corresponding expression, and
 $\Up_{p,\up}'$ is smooth in $p,\up$.
 As in the proof of
 \cite[Prop.~3.2]{JLS}, we can use Moser's method \cite{Mose}
 for proving Darboux' Theorem  to modify the maps
 $\Up'_{p,\up}$ to $\Up_{p,\up}$ with $\Up_{p,\up}^* (\om)=\om_0$.
 Define closed 2-forms $\om^s_{p,\up}$ on $B_{\ep'}$
for $(p,\up)\in U$ and $s\in[0,1]$ by
$\om^s_{p,\up}=(1-s)\om_0+s(\Up_{p,\up}')^*(\om)$. Then there
exist a family of 1-forms $\ze_{p,\up}$ on $B_{\ep'}$ satisfying
$\f{\sqrt{-1}}{2}\,\d\ze_{p,\up}=\om_0-(\Up_{p,\up}')^*(\om)$,
which can be taken as  \ea
\ze_{p,\up}=&\f{1}{4}R_{i\bar{j}k\bar{l}}(p)z^k
\bar{z}^l\bigl(-\bar{z}^j\d
z^{i}+z^i\d\bar{z}^{j}\bigr)+\f{1}{10}R_{i\bar{j}k\bar{l},m}(p)z^k
\bar{z}^l z^m\bigl(-\bar{z}^j\d
z^{i}+z^i\d\bar{z}^{j}\bigr)\nonumber\\&+\f{1}{10}R_{i\bar{j}k\bar{l},\bar{m}}(p)z^k
\bar{z}^l \bar{z}^m\bigl(-\bar{z}^j\d
z^{i}+z^i\d\bar{z}^{j}\bigr)+O(\md{\bs z}^5) .\nonumber \ea
 We use
the convention that repeated indices stand for a summation
whenever there is no confusion. In the following we compute the
first term of $\d\ze_{p,\up}$
 as an example to demonstrate the argument, \ea
&\d \bigl(\f{1}{4}R_{i\bar{j}k\bar{l}}(p)z^k
\bar{z}^l\bigl(-\bar{z}^j\d
z^{i}+z^i\d\bar{z}^{j}\bigr)\bigr)\nonumber\\=&-\f{1}{4}R_{i\bar{j}k\bar{l}}(p)\bar{z}^l\bar{z}^j\d
z^k \w\d z^{i}-\f{1}{4}R_{i\bar{j}k\bar{l}}(p)z^k \bar{z}^j \d
\bar{z}^l\w\d z^{i} -\f{1}{4}R_{i\bar{j}k\bar{l}}(p)z^k \bar{z}^l
\d \bar{z}^j\w \d z^{i} \nonumber \\& +
\f{1}{4}R_{i\bar{j}k\bar{l}}(p)\bar{z}^l z^{i}\d z^k \w\d
\bar{z}^j+\f{1}{4}R_{i\bar{j}k\bar{l}}(p)z^k z^{i} \d
\bar{z}^l\w\d \bar{z}^j +\f{1}{4}R_{i\bar{j}k\bar{l}}(p)z^k
\bar{z}^l  \d z^{i} \w \d
\bar{z}^j\nonumber\\=&+\f{1}{4}R_{i\bar{l}k\bar{j}}(p)z^k
\bar{z}^j \d z^{i} \w \d
\bar{z}^l+\f{1}{4}R_{i\bar{j}k\bar{l}}(p)z^k \bar{z}^l  \d z^{i}
\w \d \bar{z}^j\nonumber\\&+\f{1}{4}R_{k\bar{j}i\bar{l}}(p)z^i
\bar{z}^l  \d z^{k} \w \d
\bar{z}^j+\f{1}{4}R_{i\bar{j}k\bar{l}}(p)z^k \bar{z}^l  \d z^{i}
\w \d \bar{z}^j\nonumber\\=&R_{i\bar{j}k\bar{l}}(p)z^k \bar{z}^l
\d z^{i} \w \d \bar{z}^j \nonumber.\ea In the second equality we
use
$R_{i\bar{j}k\bar{l}}(p)=R_{k\bar{j}i\bar{l}}(p)=R_{i\bar{l}k\bar{j}}(p)$
which is implied by the K\"{a}hler condition, and the last
equality follows by changing the indices. The other terms can be
computed similarly, noting that there are two $\bar{z}$ with $\d
z^i$ and three $z$ with $\d \bar{z}^j$ which make the coefficient
from $\f{1}{10}$ to $\f{1}{2}$.

Now let $v^s_{p,\up}$ be the unique vector field on $B_{\ep'}$
with $v^s_{p,\up}\cdot\om^s_{p,\up}=\f{\sqrt{-1}}{2}\ze_{p,\up}$.
If we denote $v^s_{p,\up}=2\Re
\sum_{j}a^{s,j}_{p,\up}\pds{z^j}=\sum_{j}\Re
(a^{s,j}_{p,\up})\pds{x^j}+\Im (a^{s,j}_{p,\up})\pds{y^j}$, the
coefficient $a^{s,j}_{p,\up}$ will be \ea  \f{1}{4}
R_{i\bar{j}k\bar{l}}(p)z^k \bar{z}^lz^i+\f{1}{10}
R_{i\bar{j}k\bar{l},m}(p)z^k \bar{z}^l
z^mz^i+\f{1}{10}R_{i\bar{j}k\bar{l},\bar{m}}(p)z^k \bar{z}^l
\bar{z}^mz^i+O(\md{\bs z}^5) . \nonumber\ea  For $0<\ep\le\ep'$ we
construct a family of embeddings $\vp^s_{p,\up}:B_\ep\ra B_{\ep'}$
with $\vp^0_{p,\up}=\id:B_\ep\ra B_\ep\subset B_{\ep'}$ by solving
the system $\frac{\d}{\d
s}\vp_{p,\up}^s=v_{p,\up}^s\ci\vp_{p,\up}^s$. By compactness of
$[0,1]\t U$, this is possible provided $\ep>0$ is small enough.
Then $(\vp_{p,\up}^s)^*(\om^s_{p,\up})=\om_0$ for all $s$, so that
$(\vp_{p,\up}^1)^*\bigl((\Up_{p,\up}')^*(\om)\bigr) =\om_0$. The
$j$-th component of  $\vp_{p,\up}^1$ in $z$ coordinates is
 \ea
z^j+\f{1}{4} R_{i\bar{j}k\bar{l}}(p)z^k
\bar{z}^lz^i+\f{1}{10}R_{i\bar{j}k\bar{l},m}(p)z^k \bar{z}^l
z^mz^i+\f{1}{10}R_{i\bar{j}k\bar{l},\bar{m}}(p)z^k \bar{z}^l
\bar{z}^mz^i+O(\md{\bs z}^5) . \label{hst2eq0}\ea Define
$\Up_{p,\up}=\Up_{p,\up}'\ci\vp_{p,\up}^1$. Then $\Up_{p,\up}$
depends smoothly on $p,\up$. Direct computations give \ea
\Up_{p,\up}^*(g) = &g_0+\frac{1}{2}
\sum_{i,j,k,l}\Re\bigl(R_{i\bar{j}k\bar{l}}(p)z^i z^k \d\bar
z^{j}\d\bar z^{l}\bigr)\nonumber \\&\quad+\frac{1}{5}
\sum_{i,j,k,l,m}\Re\bigl(R_{i\bar{j}k\bar{l},m}(p)z^i z^k
z^m\d\bar z^{j}\d\bar z^{l}\bigr)\nonumber \\&\quad +\frac{2}{5}
\sum_{i,j,k,l,m}\Re\bigl(R_{i\bar{j}k\bar{l},\bar{m}}(p)z^i z^k
\bar z^m\d\bar z^{j}\d\bar z^{l}\bigr)+O\bigl(\md{\bs z}^4\bigr).
\label{hst2eq1}\ea The different coefficients $\f{1}{5}$ and
$\f{2}{5}$ in \eq{hst2eq1} comes from the fact that their
corresponding terms in \eq{hst2eq0} respectively have one
$\bar{z}$ and two $\bar{z}$. The rest of the proof is the same as
\cite[Prop.~3.2]{JLS}, and we refer to the proof there for
details.
\end{proof}
\begin{rem}
The K\"{a}hler manifold $M$ does not need to be compact if we
allow $\ep$ depending on points.
\end{rem}

\section{Approximate examples with estimates}
\label{hst3}

For $0<t\le R^{-1}\ep$, consider the dilation map $t:B_R\ra B_\ep$
mapping $t:(z_1,\ldots,z_n)\mapsto (tz_1,\ldots,tz_n)$. Then
$\Up_{p,\up}\ci t$ is an embedding $B_R\ra M$, so we can consider
the pullbacks $(\Up_{p,\up}\ci t)^*(\om)$ and $(\Up_{p,\up}\ci
t)^*(g)$. Define a Riemannian metric $g^t_{p,\up}$ on $B_R$ by
$g^t_{p,\up}= t^{-2}(\Up_{p,\up}\ci t)^*(g)$. It depends smoothly
on $t\in(0,R^{-1}\ep]$ and $(p,\up)\in U$, and satisfies \ea
g^t_{p,\up} = &g_0+\frac{t^2}{2}
\sum_{i,j,k,l}\Re\bigl(R_{i\bar{j}k\bar{l}}(p)z^i z^k \d\bar
z^{j}\d\bar z^{l}\bigr)\nonumber \\&\quad+\frac{t^3}{5}
\sum_{i,j,k,l,m}\Re\bigl(R_{i\bar{j}k\bar{l},m}(p)z^i z^k
z^m\d\bar z^{j}\d\bar z^{l}\bigr)\nonumber \\&\quad
+\frac{2t^3}{5}
\sum_{i,j,k,l,m}\Re\bigl(R_{i\bar{j}k\bar{l},\bar{m}}(p)z^i z^k
\bar z^m\d\bar z^{j}\d\bar z^{l}\bigr)+O\bigl(t^4\md{\bs
z}^4\bigr). \label{hst3eq1}\ea
 Since $t^{-2}(\Up_{p,\up}\ci t)^*(g)$ is
compatible with $t^{-2}(\Up_{p,\up}\ci t)^*(\om)$, we have that
$g^t_{p,\up}$ is compatible with the fixed symplectic form $\om_0$
on $B_R$ for all $t,p,\up$. Moreover, there are {\it uniform
estimates\/} on these metrics, which are summarized in the
following proposition.

\begin{prop} There exist positive constants
$C_0,C_1,C_2,\ldots$ such that for all\/ $t\in(0,\ha R^{-1}\ep]$
and\/ $(p,\up)\in U,$ the metric
$g^t_{p,\up}=t^{-2}(\Up_{p,\up}\ci t)^*(g)$ on\/ $B_R$ satisfies
the estimates \e \nm{g^t_{p,\up}-g_0}_{C^0}\le
C_0t^2\quad\text{and}\quad \nm{\pd^kg^t_{p,\up}}_{C^0}\le
C_kt^{k+1} \quad\text{for $k=1,2,\ldots,$} \label{hs3eq3} \e where
norms are taken w.r.t.\ $g_0,$ and\/ $\pd$ is the Levi-Civita
connection of\/~$g_0$. \label{hs3prop2}
\end{prop}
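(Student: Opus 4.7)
The plan is to read the estimates directly off the explicit expansion \eqref{hst3eq1} of $g^t_{p,\up}$, combined with uniformity coming from the compactness of $U$ when $M$ is compact.

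First, \eqref{hst3eq1} already presents $g^t_{p,\up}-g_0$ on $B_R$ as the sum of three explicit polynomial tensor fields in $(z,\bar z)$ with prefactors $t^2$, $t^3$, $t^3$, plus a remainder $O(t^4|\bs z|^4)$ coming from the truncation of the Taylor expansion of $\Up^*_{p,\up}(g)$ at the origin. The coefficients of the explicit terms are $R_{i\bar j k\bar l}(p)$ and its first covariant derivatives, all continuous functions of $(p,\up)\in U$; since $U$ is compact when $M$ is compact, these coefficients are uniformly bounded, and the constants absorbed into the remainder are uniformly controlled as well. In the non-compact case noted in the Remark following Proposition \ref{hst2prop1}, the same argument works locally, with constants depending on the base point.

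Second, for the $C^0$ bound I would estimate each of the explicit terms pointwise on $B_R$ by $Ct^2R^2$, $Ct^3R^3$, $Ct^3R^3$, and the remainder by $Ct^4R^4$; for $t\le \ha R^{-1}\ep$ (so that $tR\le \ep/2$) this absorbs to $\nm{g^t_{p,\up}-g_0}_{C^0}\le C_0 t^2$. For the higher derivative bounds, the Levi--Civita connection $\pd$ of $g_0$ is just the coordinate partial derivative on $B_R$, so I would differentiate the explicit expansion term by term and estimate each contribution uniformly in $(p,\up)$. The remainder $O(t^4|\bs z|^4)$, being the tail of a jointly smooth family of Taylor expansions, can be handled by pushing the Taylor expansion of $\Up^*_{p,\up}(g)$ to the order dictated by $k$ before truncating, and invoking uniform bounds on the relevant higher covariant derivatives of the curvature of $g$ on $M$.

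The main obstacle is bookkeeping: one must carefully track the powers of $t$ arising from two distinct sources, namely the explicit prefactors in \eqref{hst3eq1} and the chain-rule factors of $t$ produced when differentiating the composition $z\mapsto g_{ab}(tz)$ in $z$. There is no analytical difficulty beyond this; once the expansion \eqref{hst3eq1} is in hand and uniformity is secured by the compactness of $U$, the estimates \eqref{hs3eq3} follow as a direct consequence.
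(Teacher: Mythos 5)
Your overall strategy --- reading the bounds off the explicit expansion \eq{hst3eq1} and getting uniformity in $(p,\up)$ from compactness of $U$ --- is in essence the same as what underlies the paper's own one-line proof, which cites \cite[Prop.~3.4]{JLS} and observes that the improved expansion of Proposition~\ref{hst2prop1} raises the power of $t$. Your argument does deliver the $C^0$ bound and the case $k=1$. But there is a genuine gap at $k\ge 2$: the estimate $\nm{\pd^k g^t_{p,\up}}_{C^0}\le C_k t^{k+1}$ does not follow from differentiating \eq{hst3eq1} term by term. In \eq{hst3eq1} the chain-rule factors of $t$ have already been absorbed into the prefactors $t^2,t^3,t^4$, so differentiating in $z$ produces no further powers of $t$; applying $\pd^2$ to the term $\f{t^2}{2}\sum\Re\bigl(R_{i\bar{j}k\bar{l}}(p)z^iz^k\d\bar z^{j}\d\bar z^{l}\bigr)$ leaves $t^2$ times a fixed nonzero multiple of $R_{i\bar{j}k\bar{l}}(p)$, so $\nm{\pd^2 g^t_{p,\up}}_{C^0}$ is genuinely of size $t^2$, not $O(t^3)$, unless the curvature vanishes at $p$. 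The correct way to see the gain for large $k$ is to differentiate the unexpanded form: $g^t_{p,\up}-g_0$ has components $h_{ab}(tz)$ with $h_{ab}(w)=O(\md{w}^2)$, each derivative contributes one chain-rule factor of $t$, so $\pd^k\bigl(h_{ab}(tz)\bigr)=t^k(\pd^k h_{ab})(tz)$, and since $(\pd^k h_{ab})(w)=O\bigl(\md{w}^{\max(0,2-k)}\bigr)$ this yields $\nm{\pd^k g^t_{p,\up}}_{C^0}\le C_k t^{\max(2,k)}$ --- one power of $t$ short of \eq{hs3eq3} for every $k\ge2$. So the plan, carried out honestly, proves a weaker statement than the one claimed; the ``bookkeeping'' you defer is precisely where the claimed exponent fails, and I do not see where an extra factor of $t$ could come from.

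Two mitigating remarks. First, the shortfall is harmless for the rest of the paper: what is actually used later (see the Remark after Proposition~\ref{hs4prop}) is only that $g^t_{p,\up}-g_0$ is $O(t^2)$ in every $C^k$ norm, and that much your argument does establish uniformly in $(p,\up)$. Second, your sentence about ``two distinct sources'' of powers of $t$ conflates the expanded and unexpanded pictures: you should commit to the unexpanded form $h_{ab}(tz)$ for the derivative estimates (which is how \cite[Prop.~3.4]{JLS} proceeds), and then state the conclusion with exponent $\max(2,k)$ rather than $k+1$, or else explain explicitly where the additional power is supposed to arise.
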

\begin{proof}
This is the same as \cite[Prop. 3.4]{JLS}. But since we have a
better estimate on the metric from Proposition~\ref{hst2prop1}, we
can increase the order on $t$ by 1. \end{proof}

 We can assume $\sum_{j=1}^{n}a_j^2=1$ for simplicity.
 The image $(\Up_{p,\up}\ci t ) (\Tn)$ is a Lagrangian contained
in a $B_{2t}$ ball at $p$ in $M$. Since the geometry  of
$B_{2t}(p)$ in $(M,\om, g)$ is the same as $(B_2, \om_0,
g^t_{p,\up})$ in $\C^n$, we will do all the computations and
discussions in $(B_2, \om_0, g^t_{p,\up})$ instead for simplicity.
In the coordinates $z^j=r_je^{\sqrt{-1} \th _j},\; j=1,\ldots,
n$,\/
 the metric $g^t_{p,\up}$ becomes \ea g^t_{p,\up}=&\sum (\d
r_i^{2}+r_i^2 \d\th_i^2)+\sum (t^2 \Re A_{ij}+t^3 \Re C_{ij})(\d
r_i \d r_j-r_ir_j\d\th_i\d\th_j)\nonumber\\&  +\sum (t^2 \Im
A_{ij}+t^3 \Im C_{ij})( r_i \d \th_i\d r_j+r_j\d
r_i\d\th_j)+O\bigl(t^4\md{\bs z}^4\bigr),\label{hst2eq2} \ea where
\ea A_{ij}=A_{ji} =&\f{1}{2} \sum_{p,q}
R_{p\bar{i}q\bar{j}}(p)r\!_pr\!_q \epqij, \nonumber\\
C_{ij}=C_{ji} = &\f{1}{5}\sum_{p,q,m}
R_{p\bar{i}q\bar{j},m}(p)r\!_pr\!_qr\!_m
\epqijm\nonumber\\&+\f{2}{5}\sum_{p,q,m} R_{p\bar{i}q\bar{j},\bar
m}(p)r\!_pr\!_qr\!_m \epqijbm. \label{hst2eq3}\ea The restriction
of $g^t_{p,\up}$ on $\Tn $ is \e h^t_{p,\up}=\sum a_i^2\d
\th_i^{2}-\sum a_ia_j(t^2\Re A_{ij}+t^3\Re
C_{ij})\d\th_i\d\th_j+O(t^4),\label{hst2eq4} \e where $\md{\bs z}$
has been assumed to be $1$ on $\Tn$. For simplicity, we omit the
restriction of $A_{ij}$ and $C_{ij}$ on $\Tn$ in \eq{hst2eq4}, and
will denote $g^t_{p,\up}$ by $g_t$ and $h^t_{p,\up}$ by $h_t$
when there is no confusion. A direct computation yields \ea
&h_t^{ij}=\f{1}{a_i^2}\de _{ij}+ \f{t^2}{a_ia_j}\Re A_{ij}+
\f{t^3}{a_ia_j}\Re C_{ij}+O(t^4),\nonumber\\& g_t^{r_i r_j}=\de
_{ij}-t^2\Re A_{ij}-t^3\Re C_{ij}+O\bigl(t^4\md{\bs
z}^4\bigr),\nonumber\\& g_t^{\th_i \th_j}=\f{1}{r_i^2}\de _{ij}+
\f{t^2}{r_ir_j}\Re A_{ij}+ \f{t^3}{r_ir_j}\Re
C_{ij}+O\bigl(t^4\md{\bs z}^4\bigr),\nonumber\\&g_t^{r_i
\th_j}=-\f{t^2}{r_j}\Im A_{ij}-\f{t^3}{r_j}\Im
C_{ij}+O\bigl(t^4\md{\bs z}^4\bigr).\label{hst2eq5}\ea Now we are
ready to compute the associate $\d^*\al_t$ of the initial
Lagrangian $\Tn \subset (B_2, \om_0, g^t_{p,\up})$, and estimate
how far it is from being Hamiltonian stationary.
\begin{lem}
Denote the mean curvature vector on $\Tn$ with respect to $g_t$ by
$H_t$ and let $\al _t=H_t \cdot \om_0=\sum \al _t^k \d \th_k$.
Then
 \ea \d^*\al_t=& -
\sum_{i,j}\bigl(\f{1}{a_ia_j}\f{\pd ^2 \Im (t^2A_{ij}+t^3
C_{ij})}{\pd \th_i \pd \th _j}+\f{1}{2a_i}\f{\pd ^2 \Re
(t^2A_{jj}+t^3 C_{jj})}{\pd \th_i \pd r_i}\nonumber\\&\quad+
\f{1}{2a_i^2}\f{\pd \Re (t^2A_{jj}+t^3 C_{jj})}{\pd \th
_i}\bigr)+O(t^4),\label{hst2eq6}\ea where $A_{ij}$ and $C_{ij}$
are as defined in \eq{hst2eq3} \label{hst2lem1}\end{lem}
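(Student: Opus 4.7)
The plan is to compute $\d^*\al_t$ by a direct coordinate calculation using the polar-coordinate metric expansions (3.5)--(3.8). I would proceed in three main steps.

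First, I would reduce $\al_t$ to a single Christoffel-symbol expression. Since $T^n$ is Lagrangian in $(B_2,\om_0)$, the tangential part of $h_t^{ij}\na^{g_t}_{\pd_{\th_i}}\pd_{\th_j}$, being a linear combination of $\pd_{\th_l}$'s, pairs trivially with tangent vectors under $\om_0$. Hence for $V=\pd_{\th_k}$,
$$\al_t(\pd_{\th_k}) = h_t^{ij}\,\om_0\bigl(\na^{g_t}_{\pd_{\th_i}}\pd_{\th_j},\pd_{\th_k}\bigr) = a_k\,h_t^{ij}\,\Ga^{r_k}_{\th_i\th_j},$$
where the last step uses $\om_0=\sum_l r_l\,\d r_l\w\d\th_l$, so that pairing with $\pd_{\th_k}$ extracts only the $\pd/\pd r_k$-component, weighted by $r_k|_{T^n}=a_k$. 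This reduces the computation to $\Ga^{r_k}_{\th_i\th_j}$ with respect to $g_t$ combined with the inverse induced metric $h_t^{ij}$ of (3.8), both expanded to order $t^3$.

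Second, I would substitute into
$$\Ga^{r_k}_{\th_i\th_j} = \tfrac12 g_t^{r_k A}\bigl(\pd_{\th_i}(g_t)_{\th_j A} + \pd_{\th_j}(g_t)_{\th_i A} - \pd_A(g_t)_{\th_i\th_j}\bigr),\qquad A\in\{r_l,\th_l\},$$
reading the metric components off (3.5) and the inverse components off (3.8). The $O(1)$ contribution from the flat polar metric is $\Ga^{r_k}_{\th_i\th_j}|_{g_0}=-a_k\de_{ij}\de_{jk}$, giving $\al_t^k = -1 + O(t^2)$ (which recovers the standard torus mean-curvature form $-\sum_k\d\th_k$). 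For the $O(t^2)$ and $O(t^3)$ corrections, the $A=r_l$ piece contributes through the mixed $(g_t)_{\th_j r_l}$ entries, producing $\Im(t^2 A_{ij}+t^3 C_{ij})$-type derivatives, together with a $-\pd_{r_l}(g_t)_{\th_i\th_j}$ piece producing $\Re(t^2 A+t^3 C)$-type derivatives; the $A=\th_l$ piece is already $O(t^4)$, since $g_t^{r_k\th_l}$ is itself $O(t^2)$ and pairs only with an $O(t^2)$ perturbation of $(g_t)_{\th_j\th_l}$.

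Third, I would apply the divergence formula on $(T^n,h_t)$,
$$\d^*\al_t = -\tfrac{1}{\sqrt{|h_t|}}\,\pd_{\th_k}\bigl(\sqrt{|h_t|}\,h_t^{kl}\al_t^l\bigr).$$
Because $\al_t^l = O(t^2)$, and both $h_t^{kl}=a_k^{-2}\de_{kl}+O(t^2)$ and $\sqrt{|h_t|}$ have constant leading values on $T^n$, derivatives of these metric factors contribute only at $O(t^4)$, so
$$\d^*\al_t = -\sum_k \tfrac{1}{a_k^2}\,\pd_{\th_k}\al_t^k + O(t^4).$$
Substituting the expansion of $\al_t^k$ from Step 2 and organizing the surviving terms by parity ($\Im$ vs.\ $\Re$) and by whether indices coincide yields exactly the three summands in the claim: the mixed $\pd^2_{\th_i\th_j}\Im(t^2 A_{ij}+t^3 C_{ij})$ term from the $\pd_{\th_i}(g_t)_{\th_j r_l}$ contribution to $\Ga^{r_k}_{\th_i\th_j}$, and the two diagonal $\Re(t^2 A_{jj}+t^3 C_{jj})$ terms from the $-\pd_{r_l}(g_t)_{\th_i\th_j}$ piece combined with the outer $\pd_{\th_k}$.

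The main obstacle is the bookkeeping: tracking every $O(t^2)$ and $O(t^3)$ contribution through the inverse metric, the Christoffel symbols, and the divergence, and verifying that off-diagonal $\Re A_{ij}$ contributions cancel while the surviving pieces recombine cleanly into the three derivative expressions claimed. The K\"ahler symmetries $A_{ij}=A_{ji}$, $C_{ij}=C_{ji}$, and the curvature identities $R_{i\bar jk\bar l}=R_{k\bar ji\bar l}=R_{i\bar lk\bar j}$ already invoked in the proof of Proposition 3.1 are precisely what make this accounting close.
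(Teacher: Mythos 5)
Your Steps 1 and 2 follow the paper's own proof: the reduction $\al_t^k=a_k\sum_{i,j}h_t^{ij}(\bar{\Ga}_t)_{\th_i\th_j}^{r_k}$ via $\om_0=\sum_k r_k\,\d r_k\w\d\th_k$, and the expansion of the Christoffel symbols against \eq{hst2eq2} and \eq{hst2eq5}, are exactly how Lemma~\ref{hst2lem1} is established. The genuine gap is in Step 3. You correctly record in Step 2 that $\al_t^k=-1+O(t^2)$, yet in Step 3 you invoke ``$\al_t^l=O(t^2)$'' to conclude that derivatives of $\sqrt{|h_t|}$ and $h_t^{kl}$ contribute only at $O(t^4)$, hence that
\begin{equation*}
\d^*\al_t=-\sum_k\f{1}{a_k^2}\,\f{\pd\al_t^k}{\pd\th_k}+O(t^4).
\end{equation*}
This is false: the constant leading term $-1$ of $\al_t^l$ multiplies $\pd_{\th_k}\bigl(\sqrt{|h_t|}\,h_t^{kl}\bigr)$, which is genuinely $O(t^2)$ because both factors carry $\th$-dependent corrections through $\Re A_{kl}$ and $\Re C_{kl}$.

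Concretely, the terms you discard are
\begin{equation*}
\sum_{i,j}\f{1}{a_ia_j}\f{\pd\Re(t^2A_{ij}+t^3C_{ij})}{\pd\th_i}
\;-\;\sum_{i,j}\f{1}{2a_i^2}\f{\pd\Re(t^2A_{jj}+t^3C_{jj})}{\pd\th_i},
\end{equation*}
coming respectively from $-\sum\pd_{\th_i}(h_t^{ij})\,\al_t^j$ and from the $\ln\det\bigl((h_t)_{kl}\bigr)$ part of the divergence. The first of these cancels an equal and opposite term hidden inside $-\sum_i a_i^{-2}\pd_{\th_i}B_i$ (compare \eq{hst2eq8} with \eq{hst2eq9}), and the second is precisely the third summand of \eq{hst2eq6}. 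So your reduction would leave an uncancelled $\f{1}{a_ia_j}\pd_{\th_i}\Re(t^2A_{ij}+t^3C_{ij})$ term and omit the $\f{1}{2a_i^2}\pd_{\th_i}\Re(t^2A_{jj}+t^3C_{jj})$ term; correspondingly, your attribution of all three summands of \eq{hst2eq6} to the outer $\pd_{\th_k}$ acting on $\al_t^k$ cannot be right. The repair is simply to keep the full divergence formula, with all three contributions tracked to order $t^3$, as the paper does.
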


 \begin{proof} Because $\om_0=\sum r_k\d r_k\w \d\th _k$ and $\Tn$
 is Lagrangian, it follows that
$\ds \al_t^k=a_k\sum_{i,j}h_{t}^{ij}(\bar{\Ga}_t)_{\th_i
\th_j}^{r_k},$ where $(\bar{\Ga}_t)_{ab}^{c}$ is the Christoffel
symbol for the metric $g_t$. A direct computation gives \ea
\al_t^k=&-1-\Re (t^2 A_{kk}+t^3 C_{kk})+\sum_i
\f{a_k}{a_i}\bigl(\Re (t^2 A_{ik}+t^3 C_{ik})+\f{\pd \Im (t^2
A_{ik}+t^3 C_{ik})}{\pd \th
_i}\bigr)\nonumber\\&\quad+\f{a_k}{2}\sum_{i}\f{\pd r_i^2 \Re (t^2
A_{ii}+t^3 C_{ii})}{a_i^2 \pd r_k}+O(t^4).\nonumber\ea
Further
computation shows that the $t^2$ and $t^3$ terms of $\al_t^k$ are
\ea B_k=&\sum_{i }\f{a_k}{a_i}\f{\pd \Im (t^2 A_{ik}+t^3
C_{ik})}{\pd \th_i}+\f{a_k}{2}\sum_i\f{\pd \Re (t^2 A_{ii}+t^3
C_{ii})}{\pd r _k}\nonumber\\&+\sum_{i}\f{a_k}{a_i}\Re (t^2
A_{ik}+t^3 C_{ik}).\label{hst2eq7}\ea Recall that
$$ \d^*\al_t=-\sum \frac{\pd h_t^{ij}}{\pd
\th_i}\,\al_t^j-\sum h_t^{ij}\,\frac{\pd \al_t^j}{\pd
\th_i}-\f{1}{2}\sum
 h_t^{ij}\al_t^j\frac{\pd}{\pd
\th_i}\bigl(\ln\det((h_t)_{kl})\bigr). $$ Therefore, \ea
\d^*\al_t=&\sum_{i,j}\f{1}{a_ia_j} \f{\pd \Re (t^2A_{ij}+t^3
C_{ij})}{\pd \th _i}-\sum_{i}\f{1}{a_i^2}\f{\pd B_i}{\pd
\th_i}\nonumber\\&-\sum _{i,j}\f{1}{2a_i^2}\f{\pd \Re
(t^2A_{jj}+t^3 C_{jj})}{\pd \th _i}+O(t^4).\label{hst2eq8}\ea From
\eq{hst2eq7}, we have \ea \sum_{i}\f{1}{a_i^2}\f{\pd B_i}{\pd
\th_i}=&\sum_{i,j}\f{1}{a_ia_j}\bigl(\f{\pd ^2 \Im (t^2A_{ij}+t^3
C_{ij})}{\pd \th_i \pd \th _j}+\f{\pd \Re (t^2A_{ij}+t^3
C_{ij})}{\pd\th _i}\bigr)\nonumber\\&\quad+ \f{1}{2a_i}\f{\pd ^2
\Re (t^2A_{jj}+t^3 C_{jj})}{\pd \th_i \pd r_i} \label{hst2eq9} \ea
Combining \eq{hst2eq8} and \eq{hst2eq9}, we get \ea \d^*\al_t=& -
\sum_{i,j}\bigl(\f{1}{a_ia_j}\f{\pd ^2 \Im (t^2A_{ij}+t^3
C_{ij})}{\pd \th_i \pd \th _j}+\f{1}{2a_i}\f{\pd ^2 \Re
(t^2A_{jj}+t^3 C_{jj})}{\pd \th_i \pd r_i}\nonumber\\&\quad+
\f{1}{2a_i^2}\f{\pd \Re (t^2A_{jj}+t^3 C_{jj})}{\pd \th
_i}\bigr)+O(t^4).\nonumber\ea
\end{proof}

We now compute the orthogonal projection of  $\d^*\al_t$ to
$\Ker{\cal L}$. Recall that  $\Ker{\cal L}$ of $\Tn$ in $\C^n$ is
spanned by $1, \, \cos \th_i, \sin \th_i,  \cos (\th_i-\th_j),
\sin (\th_i-\th_j)$ for \/$i,\, j=1,\ldots,n$ and \/$i\ne j$. From
 \eq{hst2eq3}, it follows that in the leading terms only $A_{ij}$
 can project to  $\cos (\th_i-\th_j)$ and $\sin (\th_i-\th_j)$,
 while only $C_{ij}$ can project to $\cos\th_i$ and $ \sin \th_i$.
 The result is summarized in the following
lemma.
\begin{lem}
Denote the orthogonal projection from $L^2(\Tn)$ w.r.t. $g_0$ to
$\Ker{\cal L}$ by $P$. Then \ea P\, \d^*\al_t=&2t^2\sum_{j>i}\Im
\bigl(-a_j^2R_{j\bar{i}j\bar{j}}(p) +a_i^2
R_{j\bar{i}i\bar{i}}(p)\bigr)\f{\cos{(\th_j-\th_i)}}{a_i
a_j}\nonumber\\&+2t^2\sum_{j>i}\Re
\bigl(-a_j^2R_{j\bar{i}j\bar{j}}(p)
+a_i^2R_{j\bar{i}i\bar{i}}(p)\bigr)\f{\sin{(\th_j-\th_i)}}{a_i
a_j} \nonumber\\&+t^3\sum_{ij} \bigl(\Im a_i^2
R_{i\bar{i}i\bar{i},j}(p)\f{\cos \th_j}{a_j}+\Re a_i^2
R_{i\bar{i}i\bar{i},j}(p)\f{\sin \th_j}{a_j}\bigr)+O(t^4)
\label{hst2eq10}.\ea
 \label{hst2lem2}
\end{lem}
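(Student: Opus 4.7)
The plan is to substitute the explicit angular expansions of $A_{ij}$ and $C_{ij}$ from \eq{hst2eq3} into the formula for $\d^*\al_t$ from Lemma~\ref{hst2lem1}, view the result as a finite Fourier series on $\Tn$ in the angular variables, and read off the coefficients against the basis functions of $\Ker{\cal L}$. Since the $\pd/\pd\th_i$ and $\pd/\pd r_i$ operations (the latter applied before the restriction $r_k=a_k$) preserve angular exponents up to scalar multiplication, the angular frequencies present in $\d^*\al_t$ are already determined by those present in $A_{ij}$ and $C_{ij}$. Moreover, the constant mode of $\d^*\al_t$ must vanish because $\d^*\al_t$ is a divergence on the closed manifold $\Tn$, so one needs only track the $e^{\pm\sqrt{-1}\th_k}$ and $e^{\pm\sqrt{-1}(\th_k-\th_l)}$ modes.

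A parity argument, already indicated in the text preceding the lemma, cleanly decouples the $t^2$ and $t^3$ calculations: the exponents in $A_{ij}$ have the form $\th_p+\th_q-\th_i-\th_j$, whose signed index sum is zero, while those in $C_{ij}$ have the form $\th_p+\th_q-\th_i-\th_j\pm\th_m$, whose signed sum is $\pm1$. Since $\pm(\th_k-\th_l)$ has signed sum zero and $\pm\th_k$ has signed sum $\pm1$, only the $t^2A$-terms can hit the difference modes, and only the $t^3C$-terms can hit the single-angle modes.

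For the difference mode $\th_j-\th_i$, the identity $e_p+e_q-e_{i'}-e_{j'}=e_j-e_i$ translates to the multiset equation $\{p,q,i\}=\{i',j',j\}$, and I would enumerate the resulting cases (according to which of $i',j'$ coincides with $i$ and which of $p,q$ coincides with $j$); the $A_{ij}$ contribution from Term~1 of Lemma~\ref{hst2lem1} is then supplemented by the $A_{jj}$ contributions from Terms~2 and~3. Using the K\"ahler symmetries $R_{i\bar jk\bar l}=R_{k\bar ji\bar l}=R_{i\bar lk\bar j}$ invoked in the proof of Proposition~\ref{hst2prop1}, along with the identity $\Im(ce^{\sqrt{-1}\phi})=\Im(c)\cos\phi+\Re(c)\sin\phi$ (and its real-part counterpart), the surviving contributions assemble into the coefficients recorded in \eq{hst2eq10}. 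The single-angle enumeration is parallel, with the additional split between the two pieces of $C_{ij}$ carrying weights $\f{1}{5}$ and $\f{2}{5}$ corresponding to the $R_{i\bar jk\bar l,m}$ and $R_{i\bar jk\bar l,\bar m}$ terms respectively.

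The main obstacle I anticipate is the combinatorial collapse in the single-angle case: the naive enumeration produces many off-diagonal curvature derivatives $R_{p\bar iq\bar j,m}(p)$ and $R_{p\bar iq\bar j,\bar m}(p)$, which must all cancel pairwise to leave only the holomorphic-sectional-derivative form $R_{i\bar ii\bar i,j}(p)$ in \eq{hst2eq10}. The precise calibration of the weights $\f{1}{5}$ versus $\f{2}{5}$ in \eq{hst2eq3}, which reflects the asymmetry between one and two conjugate variables in the Darboux expansion, is exactly what is required to force these cancellations once combined with the K\"ahler symmetries and the differential (second) Bianchi identity for the curvature. Higher-order Fourier contributions from the $O(t^4\md{\bs z}^4)$ tail in \eq{hst3eq1}, together with modes outside $\Ker{\cal L}$, are absorbed into the $O(t^4)$ remainder in \eq{hst2eq10}.
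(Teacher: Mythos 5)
Your plan is essentially the paper's own proof: the paper substitutes \eq{hst2eq3} into Lemma~\ref{hst2lem1}, uses exactly the signed-frequency count you describe to route the $A$-terms to the difference modes and the $C$-terms to the single-angle modes, and reads off Fourier coefficients by the multiset enumeration together with the K\"ahler symmetries (it merely records the projections $PA_{ij}$, $PC_{ij}$ first and then applies the differential operators, which is the same computation in a different order). One clarification on your anticipated ``main obstacle''. Most off-diagonal derivative terms are killed not by cancellation but outright, because their frequency in one of the differentiated angles $\th_i,\th_j$ (or their $r_i$-degree, for the mixed term) vanishes; the survivors of the form $R_{i\bar ik\bar i,i}$, $R_{k\bar ii\bar i,i}$ collapse to $R_{i\bar ii\bar i,k}$ via the second Bianchi identity $R_{i\bar jk\bar l,m}=R_{m\bar jk\bar l,i}$, as you correctly flag. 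A genuine cancellation does occur --- the terms $R_{m\bar ii\bar i,\bar m}e^{-\sqrt{-1}\th_i}$, $m\ne i$, survive each individual piece of \eq{hst2eq6} and cancel only in the sum, with relative weights $\f{4}{5}$ against $\f{2}{5}+\f{2}{5}$ --- but the mechanism is the interplay of the three terms of Lemma~\ref{hst2lem1}, not a calibration between the $\f{1}{5}$ and $\f{2}{5}$ coefficients of \eq{hst2eq3}; those two coefficients only recombine at the end through $\Im\bigl(R_{i\bar ii\bar i,\bar j}(p)e^{-\sqrt{-1}\th_j}\bigr)=-\Im\bigl(R_{i\bar ii\bar i,j}(p)e^{\sqrt{-1}\th_j}\bigr)$ as in \eq{hst2eq15}--\eq{hst2eq17}. (The displayed $PC_{ii}$ in the paper silently drops these cancelling terms, so your instinct to track them is sound.) Finally, note that $\d^*\al_t$ integrates to zero against $\d V_{h_t}$ rather than $\d V_0$, so the constant mode is only $O(t^4)$, which still suffices.
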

\begin{proof} From the expression of $A_{ij}$ in \eq{hst2eq3} and
$\Ker{\cal L},$ we have \ea
PA_{ii}=&\f{1}{2}a_i^2R_{i\bar{i}i\bar{i}}(p)+\sum_{j\ne i} a_i
a_j \Rjiii, \quad \text{and for } i \ne j,\nonumber
\\ P A_{ij}=&\sum_{q\ne i,j} a_i a_q
R_{i\bar{i}q\bar{j}}(p)e^{\sqrt{-1}(\th_q-\th_j)}+\sum_{q\ne i,j}
a_j a_q
R_{q\bar{i}j\bar{j}}(p)e^{\sqrt{-1}(\th_q-\th_i)}\nonumber\\&+a_i
a_j
R_{i\bar{i}j\bar{j}}(p)+\f{1}{2}a_i^2R_{i\bar{i}i\bar{j}}(p)\eij
+\f{1}{2}a_j^2R_{j\bar{i}j\bar{j}}(p)\eji. \nonumber \ea
Therefore, \ea -P\,\sum_{i,j}\f{1}{a_ia_j}\f{\pd ^2 \Im
A_{ij}}{\pd \th_i \pd \th _j}=&-P\sum_i\f{1}{a_i^2} \f{\pd ^2 \Im
A_{ii}}{\pd \th_i ^2}-P\,\sum_{i \ne j}\f{1}{a_ia_j}\f{\pd ^2 \Im
A_{ij}}{\pd \th_i \pd \th _j} \nonumber\\=&\sum_{j \ne
i}\f{a_j}{a_i}\Im
\bigl(R_{j\bar{i}i\bar{i}}(p)-R_{j\bar{i}j\bar{j}}(p)\bigr)\eji.
\label{hst2eq11}\ea
 Similar computation gives \e
-P\,\sum_{i,j}\f{1}{2a_i}\f{\pd ^2 \Re A_{jj}}{\pd \th_i \pd
r_i}=\f{1}{2}\sum_{i\ne j}\bigl(\f{a_i}{a_j}-\f{a_j}{a_i}\bigr)\Im
\Rjiii,\label{hst2eq12}\e and \e-P\,\sum_{i,j}\f{1}{2a_i^2}\f{\pd
\Re A_{jj}}{\pd \th _i}=\f{1}{2}\sum_{i\ne
j}\bigl(\f{a_i}{a_j}-\f{a_j}{a_i}\bigr)\Im
\Rjiii.\label{hst2eq13}\e Combining \eq{hst2eq11}, \eq{hst2eq12},
and \eq{hst2eq13},  we obtain that the $t^2$ coefficient of
$\d^*\al_t$ is \ea &\sum_{i\ne j}\Im\bigl(
\bigl(-\f{a_j}{a_i}R_{j\bar{i}j\bar{j}}(p)
+\f{a_i}{a_j}R_{j\bar{i}i\bar{i}}(p)\bigr)\eji\bigr)
\nonumber\\=&2\sum_{j>i}\Im \bigl(-a_j^2R_{j\bar{i}j\bar{j}}(p)
+a_i^2 R_{j\bar{i}i\bar{i}}(p)\bigr)\f{\cos{(\th_j-\th_i)}}{a_i
a_j}\nonumber\\&+2\sum_{j>i}\Re
\bigl(-a_j^2R_{j\bar{i}j\bar{j}}(p)
+a_i^2R_{j\bar{i}i\bar{i}}(p)\bigr)\f{\sin{(\th_j-\th_i)}}{a_i
a_j}. \label{hst2eq14}\ea

We also have \ea PC_{ii}=&\f{1}{5}a_i^3 \Riiiii+\f{3}{5}\sum_{j\ne
i}a_i^2 a_j \Riiiij, \nonumber\\& +\f{2}{5}a_i^3
\Riiiibi+\f{2}{5}\sum_{j\ne i}a_i^2 a_j \Riiiibj,
\quad\text{and}\nonumber\\
PC_{ij}=&\f{3}{5}a_i^2 a_j
R_{i\bar{i}j\bar{j},i}(p)\ei+\f{3}{5}a_i a_j^2
R_{i\bar{i}j\bar{j},j}(p)\ej\nonumber\\&+\f{6}{5}\sum_{q \ne
i,j}a_i a_j
a_qR_{i\bar{i}j\bar{j},q}(p)e^{\sqrt{-1}\th_q}+\f{4}{5}\sum_{q}a_i
a_j a_q
R_{i\bar{i}j\bar{j},\bar{q}}(p)e^{-\sqrt{-1}\th_q}\nonumber\ea for
$ i \ne j$. Further computation gives \ea
-P\,\sum_{i,j}\f{1}{a_ia_j}\f{\pd ^2 \Im C_{ij}}{\pd \th_i \pd \th
_j}=&-P\sum_i\f{1}{a_i^2} \f{\pd ^2 \Im C_{ii}}{\pd \th_i
^2}-P\,\sum_{i \ne j}\f{1}{a_ia_j}\f{\pd ^2 \Im C_{ij}}{\pd \th_i
\pd \th _j} \nonumber\\=&\sum_i \f{a_i}{5} \Im
R_{i\bar{i}i\bar{i},i}(p)\ei +\f{2a_i}{5}\Im \Riiiibi
\nonumber\\=&-\sum_i \f{a_i}{5} \Im \Riiiii. \label{hst2eq15}\ea
We use $\Im \Riiiibi=-\Im \Riiiii$ in the last equality. For the
other terms, we similarly have \ea
&-P\,\sum_{i,j}\f{1}{2a_i}\f{\pd ^2 \Re C_{jj}}{\pd \th_i \pd
r_i}\nonumber\\=&-\sum_{i}\f{1}{10a_i}\f{\pd^2 }{\pd \th_i \pd
r_i}\bigl(r_i^3 \Re \Riiiii+2r_i^3 \Re
\Riiiibi\bigr)\nonumber\\&-\sum_{i\ne j}\f{1}{10a_j}\f{\pd^2 }{\pd
\th_j \pd r_j}\bigl(3r_i^2 r_j \Re \Riiiij+2r_i^2r_j\Re \Riiiibj
\bigr)\nonumber\\=&\f{9}{10}\sum_{i}a_i \Im
\Riiiii+\f{1}{2}\sum_{i\ne j}\f{a_i ^2}{a_j} \Im
\Riiiij,\label{hst2eq16}\ea and \ea
&-P\,\sum_{i,j}\f{1}{2a_i^2}\f{\pd \Re C_{jj}}{\pd \th
_i}\nonumber \\=&-\sum_{i}\f{3a_i}{10}\f{\pd \Re \Riiiii  }{\pd
\th_i}-\sum_{i\ne j}\f{a_i^2}{2a_j}\f{\pd \Re \Riiiij}{\pd \th_j }
\nonumber\\=&\f{3}{10}\sum_{i}a_i \Im \Riiiii+\f{1}{2}\sum_{i\ne
j}\f{a_i ^2}{a_j} \Im \Riiiij.\label{hst2eq17}\ea Putting
\eq{hst2eq15}, \eq{hst2eq16} and  \eq{hst2eq17} together, we
conclude that the $t^3$ coefficient of $\d^*\al_t$ is \ea
&\sum_{i}a_i \Im \Riiiii+\sum_{i\ne j}\f{a_i ^2}{a_j} \Im \Riiiij
\nonumber\\=&\sum_{ij} \Im a_i^2 R_{i\bar{i}i\bar{i},j}(p)\f{\cos
\th_j}{a_j}+\Re a_i^2 R_{i\bar{i}i\bar{i},j}(p)\f{\sin
\th_j}{a_j}.\label{hst2eq18} \ea Thus \eq{hst2eq14} and
\eq{hst2eq18} yield  \eq{hst2eq10}.
\end{proof}
\section{Perturbation}
\label{hst4} We will formulate a family of fourth-order nonlinear
elliptic partial differential operators $P^t_{p,\up}:C^\iy(\Tn)\ra
C^\iy(\Tn)$ depending on $(p,\up)\in U$ and small $t>0$, such that
$C^1$-small $f\in C^\iy(\Tn)$ correspond to Lagrangians
$L_{p,\up}^{t,f}$ in $M$, and $L_{p,\up}^{t,f}$ is Hamiltonian
stationary when $P^t_{p,\up}(f)=0$.

We first set up the problem and introduce some related properties.
Let $L$ be a real $n$-manifold. Then its cotangent bundle $T^*L$
has a {\it canonical symplectic form} $\hat\om$, defined as
follows. Let $(x_1,\ldots,x_n)$ be local coordinates on $L$.
Extend them to local coordinates $(x_1,\ldots,x_n,y_1,\ldots,y_n)$
on $T^*L$ such that $(x_1,\ldots,y_n)$ represents the 1-form
$y_1\d x_1+\cdots+y_n \d x_n$ in $T_{(x_1,\ldots,x_n)}^*L$. Then
$\hat\om=\d x_1\w\d y_1+ \cdots+\d x_n\w\d y_n$. Identify $L$ with
the zero section in $T^*L$. Then $L$ is a {\it Lagrangian
submanifold\/} of $(T^*L,\hat\om)$. The following theorem
 shows that any compact Lagrangian submanifold
$L$ in a symplectic manifold looks locally like the zero section
in~$T^*L$.

\medskip
\noindent{\bf Lagrangian Neighbourhood Theorem
\cite[Th.~3.33]{McSa}} {\it Let\/ $(M,\om)$ be a symplectic
manifold and\/ $L\subset M$ a compact Lagrangian submanifold. Then
there exists an open tubular neighbourhood\/ $T$ of the zero
section $L$ in $T^*L,$ and an embedding $\Phi:T\ra M$ with\/
$\Phi\vert_L=\id:L\ra L$ and\/ $\Phi^*(\om)=\hat\om,$ where
$\hat\om$ is the canonical symplectic structure on~$T^*L$.}
\medskip

We shall call $T,\Phi$ a {\it Lagrangian neighbourhood\/} of $L$.
Such neighbourhoods are useful for parametrizing nearby Lagrangian
submanifolds of $M$. Suppose that $\ti L$ is a Lagrangian
submanifold of $M$ which is $C^1$-close to $L$. Then $\ti L$ lies
in $\Phi(T)$, and is the image $\Phi(\Ga_\al)$ of the graph
$\Ga_\al$ of a unique $C^1$-small 1-form $\al$ on $L$. As $\ti L$
is Lagrangian and $\Phi^*(\om)=\hat\om$ we see that
$\hat\om\vert_{\Ga_\al}\equiv 0$. But $\hat\om\vert_{\Ga_\al}
=-\pi^*(\d\al)$, where $\pi:\Ga_\al\ra L$ is the natural
projection. Hence $\d\al=0$, and $\al$ is a {\it closed\/
$1$-form}. This establishes a 1-1 correspondence between
$C^1$-small closed 1-forms on $L$ and Lagrangian submanifolds $\ti
L$ close to $L$ in~$M$.

Making $T$ smaller if necessary, we can suppose $T$ is of the form
\e T=\bigl\{(p,\al):\text{$p\in L$, $\al\in T_p^* L$,
$\md{\al}<\de$}\bigr\} \label{hs4eq1} \e for some small $\de>0$,
where $\md{\al}$ is computed using the metric $g_0\vert_L$.  Now
take $L=\Tn\subset \C^n$. Let $\Tn \subset\Phi(T)\subset
B_2\subset\C^n$ and $f\in C^\iy(\Tn)$ with $\nm{\d f}_{C^0}<\de$.
Define the graph $\Ga_{\d f}$ of $\d f$  to be $\Ga_{\d
f}=\bigl\{(q,\d f\vert_q):q\in \Tn \bigr\}$. Then $\Ga_{\d f}$ is
an embedded Lagrangian submanifold  in $(T,\hat\om)$. Since
$\Phi^*(\om_0)=\hat\om$, we see that $\Phi(\Ga_{\d f})$ is
Lagrangian in $(B_2,\om_0)$.

Let $0<t\le\f{1}{2}\ep$. For each $f\in C^\iy(\Tn)$ with $\nm{\d
f}_{C^0}<\de$, define $L_{p,\up}^{t,f}=\Up_{p,\up}\ci t\ci \Phi
(\Ga_{\d f})$. Then $L_{p,\up}^{t,f}$ is an embedded submanifold
of $M$ diffeomorphic to $\Tn$, and as $\Phi(\Ga_{\d f})$ is
Lagrangian in $(B_2,\om_0)$ and $(\Up_{p,\up}\ci
t)^*(\om)=t^2\om_0$, we see that $L_{p,\up}^{t,f}$ is Lagrangian
in $(M,\om)$. We can further restrict $\int_{\Tn}f\,\d V_0=0$
because $f$ and $f+c$ define the same Lagrangian submanifold. Here
$\d V_0$ is the induced volume form on $\Tn$ w.r.t. $g_0$. Denote
the induced metric of $g^t_{p,\up}$ on $\Phi(\Ga_{\d f})$ by
$h^{t,f}_{p,\up}$ and $\Phi_f:q\in \Tn \mapsto\Phi(q,\d
f\vert_q).$  We define
\e P^t_{p,\up}:\bigl\{f\in
C^\iy(\Tn):\nm{\d f}_{C^0}<\de \bigr\}\longra
C^\iy(\Tn)\label{hs4eq8} \e
 to be the
Euler--Lagrangian operator of \ea
F^t_{p,\up}(f)&=\Vol_{g^t_{p,\up}}\Phi (\Ga_{\d f})=\int_{\bs
\Phi(\Ga_{\d f})}\d V_{h^{t,f}_{p,\up}}\nonumber\\&=\int_{
\Tn}(\Phi_f)^*\bigl(\d
V_{h^{t,f}_{p,\up}}\bigr)\nonumber\\&=\int_{
\Tn}G^t_{p,\up}\bigl(q,\d f\vert_q,\nabla\d f\vert_q\bigr)\,\d
V_0, \label{hs4eq9} \ea where $\nabla$ is the Levi-Civita
connection of the induced metric $h_0$ of $g_0$ on $\Tn$. Assume
that $ -\d^*\al_{H_{f}}$ is computed w.r.t  $g^t_{p,\up}$ at $\Phi
(\Ga_{\d f})$, then \e P^t_{p,\up}(f)=-\bigl(\sqrt{\det
\bigl(\Phi_f^*(h^{t,f}_{p,\up})\bigr)}/\sqrt{\det(h_0)}\bigr)\,\d^*\al_{H_{f}}
=-J^{t,f}_{p,\up}\,\d^*\al_{H_{f}}.\label{hslt4eq1}\e Here we
identify the function $\d^*\al_{H_{f}}$ on $\Phi (\Ga_{\d f})$
with its pull back  on $\Tn$ for simplicity. Because
$J^{t,f}_{p,\up}\ne 0$, it implies that $P^t_{p,\up}(f)\equiv 0$
if and only if $\d^*\al_{H_{f}}\equiv 0$. Thus a zero of
$P^t_{p,\up}$ will give a Hamiltonian stationary Lagrangian w.r.t.
$g^t_{p,\up}$. We choose $P^t_{p,\up}(f)$ instead of
$\d^*\al_{H_{f}}$  just for technical reasons.

Noting that $g^0_{p,\up}=g_0$, we define $F_0(f)=\Vol_{g_0}\Phi
(\Ga_{\d f})$. Denote the corresponding operator at $t=0$ by
$P_0$, and the linearized operators of $P^t_{p,\up}$ and $P_0$ at
$0$ by ${\cal L}^t_{p,\up}$ and $\cal L$ respectively. Here $\cal
L$ is the same as \eq{hst21eq6} w.r.t. $g_0$, but ${\cal
L}^t_{p,\up}$ is slightly different from the one in \eq{hst21eq6}
w.r.t. $g^t_{p,\up}$. We then have
\begin{prop} {\bf \cite[Prop. 4.1]{JLS}}
Let any\/ $k\ge 0,$ $\ga\in(0,1),$ and small\/ $\de'>0$ and
$\ze>0$ be given. Then if\/ $t>0$ is sufficiently small, for all\/
$f\in C^{k+4,\ga}(\Tn)$ with\/ $\nm{\d f}_{C^0}\le\ha\de$ and
$\nm{\nabla \d f}_{C^0}\le \de',$ and all $(p,\up)\in U$ we have
 \e
\bnm{P^t_{p,\up}(f)-P_0(f)}{}_{C^{k,\ga}}\le\ze
\quad\text{and\/}\quad \bnm{{\cal L}^t_{p,\up}(f)-{\cal
L}(f)}{}_{C^{k,\ga}}\le \ze \nm{f}_{C^{k+4,\ga}}. \label{hs4eq15}
\e That is, by taking $t$ small we can suppose $P^t_{p,\up}$ and
${\cal L}^t_{p,\up}$ are arbitrarily close to $P_0$ and $\cal L$
as operators from $C^{k+4,\ga}(\Tn)$ to $C^{k,\ga}(\Tn)$ (on their
respective domains) uniformly in\/~$(p,\up)\in U$. \label{hs4prop}
\end{prop}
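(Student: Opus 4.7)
The plan is to exploit the fact that both $P^t_{p,\up}$ and its linearization ${\cal L}^t_{p,\up}$ depend smoothly on the underlying metric $g^t_{p,\up}$, and to combine this with the uniform metric estimates of Proposition~\ref{hs3prop2}. The strategy is essentially a stability-of-coefficients argument for a family of quasilinear fourth-order elliptic operators, with $t=0$ as the reference point.

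First I would give $P^t_{p,\up}(f)$ an explicit local description. Starting from the volume functional \eq{hs4eq9}, the integrand $G^t_{p,\up}(q,\d f\vert_q,\nabla\d f\vert_q)$ is a smooth function of $q\in\Tn$, the 1-jet and 2-jet of $f$, and the 2-jet of $g^t_{p,\up}$ along $\Phi_f(\Tn)$. Taking the Euler--Lagrange equation and multiplying by $J^{t,f}_{p,\up}$ as in \eq{hslt4eq1}, one obtains a universal smooth map $\Psi$ such that schematically
\begin{equation*}
P^t_{p,\up}(f)=\Psi\bigl(q,\,f,\,\d f,\,\nabla\d f,\,\nabla^2\d f,\,\nabla^3\d f;\,g^t_{p,\up},\,\pd g^t_{p,\up},\,\pd^2 g^t_{p,\up}\bigr),
\end{equation*}
with an analogous formula for ${\cal L}^t_{p,\up}$ differing from \eq{hst21eq6} only in that all metric-dependent quantities are computed with respect to $g^t_{p,\up}$. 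The hypotheses $\nm{\d f}_{C^0}\le\ha\de$ and $\nm{\nabla\d f}_{C^0}\le\de'$ are used precisely here: they ensure that the 1-jet stays inside the Lagrangian neighbourhood $T$ of \eq{hs4eq1} and inside the smoothness domain of $\Psi$, uniformly in $(p,\up)\in U$.

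Second I would invoke Proposition~\ref{hs3prop2} to conclude $\nm{g^t_{p,\up}-g_0}_{C^{k+2}}=O(t^2)$ uniformly in $(p,\up)\in U$, using compactness of the $\U(n)$-frame bundle $U$. The smoothness of $\Psi$ together with the uniform $C^{k+4,\ga}$-bound on $f$ then yields, by the chain rule and the standard Schauder-type estimates, constants $C,C'>0$ depending on $k,\ga,\de,\de'$ but not on $(p,\up)$ or small $t$, such that
\begin{equation*}
\nm{P^t_{p,\up}(f)-P_0(f)}_{C^{k,\ga}}\le Ct^2\quad\text{and}\quad\nm{{\cal L}^t_{p,\up}(f)-{\cal L}(f)}_{C^{k,\ga}}\le C't^2\,\nm{f}_{C^{k+4,\ga}}.
\end{equation*}
Choosing $t$ so small that $Ct^2<\ze$ and $C't^2<\ze$ gives \eq{hs4eq15}.

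The main obstacle will be the bookkeeping for the second estimate. Since ${\cal L}^t_{p,\up}$ is a fourth-order operator, one must write ${\cal L}^t_{p,\up}-{\cal L}$ as a fourth-order linear operator whose coefficients are $O(t^2)$ in $C^{k,\ga}$, and then invoke continuity of such operators from $C^{k+4,\ga}(\Tn)$ to $C^{k,\ga}(\Tn)$ to extract the linear dependence on $\nm{f}_{C^{k+4,\ga}}$. The control of the coefficients requires applying $\pd^k$ to the universal $\Psi$ and uniformly estimating mixed derivatives of $g^t_{p,\up}$, for which Proposition~\ref{hs3prop2} provides exactly the right $C^k$ information. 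No new geometric input beyond Proposition~\ref{hs3prop2} and the smooth dependence of Euler--Lagrange operators on their defining data is required, which is why the authors simply cite the parallel argument from~\cite{JLS}.
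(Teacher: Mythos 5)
Your argument is correct and is essentially the proof behind the cited result: the paper itself gives no proof of this proposition beyond the citation of \cite{JLS} and the remark that Proposition~\ref{hs3prop2} forces the error to be $O(t^2)$, which is exactly the coefficient-stability argument for the quasilinear Euler--Lagrange operator that you carry out. The one caveat is that your first estimate quietly invokes a ``uniform $C^{k+4,\ga}$-bound on $f$'' that is not among the stated hypotheses (only $\d f$ and $\nabla\d f$ are bounded in $C^0$, so the top-order terms of $P^t_{p,\up}(f)-P_0(f)$ are really $O(t^2)$ times a polynomial in $\nm{f}_{C^{k+4,\ga}}$); this imprecision is inherited from the statement as quoted from \cite{JLS} and is harmless where the proposition is applied, since there $\nm{f^t_{p,\up}}_{C^{k+4,\ga}}=O(t^2)$.
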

\begin{rem} From Proposition~\ref{hs3prop2},
 it follows that we only need to take $ct^2\le \ze$
for some fixed constant $c$ (also see the Appendix). \end{rem}
\begin{thm} {\bf \cite[Th. 5.1]{JLS}} Suppose $0<t\le\ha
\ep$ is sufficiently small and fixed. Then for all\/ $(p,\up)\in
U,$ there exists $f^t_{p,\up}\in C^\iy(\Tn)$ satisfying \e
P^t_{p,\up}(f^t_{p,\up})\in\Ker{\cal L} \quad\text{and\/}\quad
f^t_{p,\up}\perp \Ker{\cal L}, \label{hs5eq1} \e where
$f^t_{p,\up}\perp \Ker{\cal L}$ means $f^t_{p,\up}$ is
$L^2$-orthogonal to $\Ker{\cal L}$. Furthermore $f^t_{p,\up}$ is
the unique solution of \eq{hs5eq1} with
$\nm{f^t_{p,\up}}_{C^{4,\ga}}$ small, and\/ $f^t_{p,\up}$ depends
smoothly on~$(p,\up)\in U$. \label{hs5thm}
\end{thm}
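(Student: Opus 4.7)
The plan is to solve the equation modulo the finite dimensional obstruction $\Ker\mathcal{L}$ via a contraction-mapping argument, essentially a Lyapunov--Schmidt reduction. Let $\Pi:L^2(\Tn)\to\Ker\mathcal{L}$ denote orthogonal projection with respect to $g_0$, and set $\Pi^\perp = I-\Pi$. The equation $P^t_{p,\up}(f)\in\Ker\mathcal{L}$ with $f\perp\Ker\mathcal{L}$ is equivalent to the pair of conditions $\Pi^\perp P^t_{p,\up}(f)=0$ and $\Pi f=0$. I work on the closed subspace $X^{k+4,\ga} := (\Ker\mathcal{L})^\perp\cap C^{k+4,\ga}(\Tn)$ and its target $X^{k,\ga}$.

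First I note that $\mathcal{L}$, being a self-adjoint elliptic operator of order $4$ on the compact torus, restricts to an isomorphism $\mathcal{L}:X^{4,\ga}\to X^{0,\ga}$, with a bounded inverse $G$ by standard Schauder theory. By Proposition~\ref{hs4prop}, $\mathcal{L}^t_{p,\up}$ converges to $\mathcal{L}$ in operator norm (uniformly in $(p,\up)\in U$) as $t\to 0$, so for $t$ sufficiently small the perturbed operator $\Pi^\perp\mathcal{L}^t_{p,\up}|_{X^{4,\ga}}:X^{4,\ga}\to X^{0,\ga}$ is also invertible, with inverse $G^t_{p,\up}$ whose norm is bounded uniformly in $(t,p,\up)$.

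Now write the Taylor expansion $P^t_{p,\up}(f)=P^t_{p,\up}(0)+\mathcal{L}^t_{p,\up}(f)+Q^t_{p,\up}(f)$, where $Q^t_{p,\up}$ is the nonlinear remainder. Because $P^t_{p,\up}$ arises as the Euler--Lagrange operator of the smooth functional $F^t_{p,\up}$ in \eqref{hs4eq9}, the remainder satisfies a quadratic estimate
\e
\bnm{Q^t_{p,\up}(f_1)-Q^t_{p,\up}(f_2)}_{C^{0,\ga}}\le C\bigl(\nm{f_1}_{C^{4,\ga}}+\nm{f_2}_{C^{4,\ga}}\bigr)\nm{f_1-f_2}_{C^{4,\ga}}
\label{plan-quad}
\e
on a small ball, with $C$ independent of $(t,p,\up)$. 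Moreover $\nm{P^t_{p,\up}(0)}_{C^{0,\ga}}\to 0$ uniformly in $(p,\up)$ as $t\to 0$ (in fact of size $O(t^2)$ by Lemma~\ref{hst2lem1}). The equation $\Pi^\perp P^t_{p,\up}(f)=0$ then becomes the fixed-point problem
\e
f = T^t_{p,\up}(f) := -G^t_{p,\up}\,\Pi^\perp\bigl(P^t_{p,\up}(0)+Q^t_{p,\up}(f)\bigr)
\label{plan-fp}
\e
on $X^{4,\ga}$. Combining the uniform bound on $G^t_{p,\up}$ with \eqref{plan-quad} and the smallness of $P^t_{p,\up}(0)$, standard estimates show that for $t$ sufficiently small $T^t_{p,\up}$ maps some ball $\{f\in X^{4,\ga}:\nm{f}_{C^{4,\ga}}\le\eta(t)\}$ into itself and is a contraction with factor $\le \tfrac12$. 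The Banach fixed-point theorem then yields a unique $f^t_{p,\up}$ in this ball solving \eqref{hs5eq1}.

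For smooth dependence on $(p,\up)$, the maps $(p,\up)\mapsto P^t_{p,\up}(0)$ and $(p,\up)\mapsto \mathcal{L}^t_{p,\up}$ are smooth by Proposition~\ref{hst2prop1} and the construction of $P^t_{p,\up}$, so $T^t_{p,\up}$ depends smoothly on $(p,\up)$; the implicit function theorem applied to $(f,p,\up)\mapsto f-T^t_{p,\up}(f)$ at the solution gives smooth dependence. Finally, $C^\infty$ regularity of $f^t_{p,\up}$ follows by a bootstrap: the identity $\mathcal{L}^t_{p,\up}(f^t_{p,\up}) = -P^t_{p,\up}(0)-Q^t_{p,\up}(f^t_{p,\up}) + \Pi[\cdots]$ with right-hand side in $C^{k,\ga}$ whenever $f^t_{p,\up}\in C^{k+4,\ga}$, combined with elliptic regularity for the fourth-order operator $\mathcal{L}^t_{p,\up}$, upgrades $f^t_{p,\up}$ successively to all $C^{k,\ga}$. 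The main obstacle is the first step: verifying that the quadratic estimate \eqref{plan-quad} and the operator-norm closeness $\|\mathcal{L}^t_{p,\up}-\mathcal{L}\|$ are uniform in $(p,\up)\in U$ even when $M$ is noncompact; this is precisely what Proposition~\ref{hs3prop2} and Proposition~\ref{hs4prop} are designed to supply, so the rest of the argument is a routine application of Banach contraction and elliptic bootstrapping.
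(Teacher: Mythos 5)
Your proposal is correct and follows essentially the same route as the proof this paper relies on: the theorem is quoted from \cite[Th.~5.1]{JLS} without a reproduced proof, but the paper's Appendix sets up exactly your decomposition $\d^*\al(t,p,\up,f)=\d^*\al(t,p,\up,0)-{\cal \bar L}^t_{p,\up}f+{\cal Q}^t_{p,\up}(f)$ with the quadratic estimate $\bnm{{\cal Q}^t_{p,\up}(f)}_{k,\ga}\le c\nm{f}_{k+4,\ga}^2$, and the text explicitly describes the construction as an application of the Implicit Function Theorem, i.e.\ the same Lyapunov--Schmidt reduction onto $(\Ker{\cal L})^\perp$ with the uniform bounds of Propositions~\ref{hs3prop2} and~\ref{hs4prop} supplying the smallness. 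Your contraction-mapping formulation, the invertibility of ${\cal L}$ on $(\Ker{\cal L})^\perp$ by self-adjoint elliptic theory, and the bootstrap to $C^\iy$ are all consistent with that argument.
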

Because $\Tn$ is $T^n$ invariant, it induces a $T^n$ action on the
cotangent bundle of $\Tn$, and $T$ is $T^n$-invariant. Moreover,
we can  choose $\Phi$  to be equivariant under the actions of
$T^n$ on $T$ and $\C^n$ following the proof of the
dilation-equivariant Lagrangian Neighbourhood Theorem
in~\cite[Th.~4.3]{Joyc2}. Furthermore, the functions $f^t_{p,\up}$
in Theorem \ref{hs5thm} satisfy $f^t_{p,\up\ci\ga}\equiv
f^t_{p,\up}\ci\ga$ for $\ga\in T^n$. Define
$L_{p,\up}^t=L_{p,\up}^{t,\smash{f^{\smash{t}}_{\smash{p,\up}}}}$
for $(p,\up)\in U$ and a smooth map $H^t:U\ra\Ker{\cal L}$ by
$H^t:(p,\up)\mapsto P^t_{p,\up} (f^t_{p,\up})$. The map $H^t$ is
$T^n$-equivariant, and depends on $t$ smoothly as $t$ changes. We
refer to \cite{JLS} for a detailed discussion on the setting and
properties.

Now we are ready to state and prove our main result:
\begin{thm}  Suppose that \/
$(M,\om,g)$ is an n-dimensional K\"ahler manifold, and write $U$
for the $\U(n)$ frame bundle of $M$. The subgroup of diagonal
matrices $T^n\subset \U(n)$ acts on $U$. For any given
$a_i>0,\;i=1,\ldots, n$, define $F_{a_1,\ldots,a_n}: U/T^n\ra \R$
by \/
$F_{a_1,\ldots,a_n}(p,[\up])=\sum_{i=1}^{n}a_i^2R_{i\bar{i}i\bar{i}}(p)$,
where $p\in M, \; \up \in \U(n),$ and the holomorphic sectional
curvature $R_{i\bar{i}i\bar{i}}(p)$ is computed w.r.t. the unitary
frame $\up$ at $p$ whose value is clearly independent of the
representative $\up$ of $[\up]$. Assume that $(p_0,[\up_0])\in
U/T^n$ is a non-degenerate critical point of $F_{a_1,\ldots,a_n},$
then for $t$ small there exist a smooth family $(p(t),[\up(t)])
\in U/T^n$ satisfying $(p(0),[\up(0)])=(p_0,[\up_0])$ and a smooth
family of embedded Hamiltomian stationary Lagrangian tori with
radii $(ta_1,\ldots, ta_n)$ center at $p(t)$ which are invariant
under $T^n$ action and posited w.r.t any representative of
$[\up(t)]$. Moreover, the distance between $(p(t),[\up(t)])$ and
$(p_0,[\up_0])$ in $U/T^n$ is bounded by $ct^2$. The family of
embedded Hamiltomian stationary Lagrangian tori  do not intersect
each other when $t$ is small. \label{hst4thm2}
\end{thm}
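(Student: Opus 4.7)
My plan is to combine the setup from \cite{JLS} with the leading-order calculation of Lemma \ref{hst2lem2}, and to realise the hypothesis on $F_{a_1,\ldots,a_n}$ as a non-degeneracy condition for an implicit function argument. By Theorem \ref{hs5thm} we have a smooth $T^n$-equivariant map $H^t : U \to \Ker{\cal L}$ defined by $H^t(p,\up) = P^t_{p,\up}(f^t_{p,\up})$, any zero of which produces an embedded Hamiltonian stationary Lagrangian $L^t_{p,\up}$ in $M$. Since $H^t$ is $T^n$-equivariant, its zero locus is $T^n$-invariant and descends to $U/T^n$, so the task reduces to finding a zero of $H^t$ near $(p_0,[\up_0])$ for small $t > 0$.

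The first step is to identify the leading part of $H^t$ with the differential of $F_{a_1,\ldots,a_n}$. Since $f^t_{p,\up}$ is $C^{4,\ga}$-small and $L^2$-orthogonal to $\Ker{\cal L}$, the $\Ker{\cal L}$-projection of $H^t$ equals $P\,\d^*\al_t$ from Lemma \ref{hst2lem2} up to higher-order-in-$t$ corrections. The constant component vanishes because $\int_{\Tn}\d^*\al_t\,\d V_0 = 0$, and the remaining summands $V^{(1)}=\mathrm{span}\{\cos\th_j,\sin\th_j\}$ and $V^{(2)}=\mathrm{span}\{\cos(\th_j-\th_i),\sin(\th_j-\th_i)\}$ receive contributions of orders $t^3$ and $t^2$ respectively. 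A short first-variation calculation then shows that under a skew-Hermitian frame variation with off-diagonal parameter $\vp_{kj}$ one has $\pd F_{a_1,\ldots,a_n}/\pd\vp_{kj} = 2(a_j^2R_{k\bar{j}j\bar{j}} - a_k^2R_{k\bar{j}k\bar{k}})$, matching the $V^{(2)}$-coefficients in \eq{hst2eq10}; and $\pd F_{a_1,\ldots,a_n}/\pd x^j = \sum_i a_i^2 R_{i\bar{i}i\bar{i},j}$ in normal coordinates at $p$, matching the $V^{(1)}$-coefficients. Under the resulting identification of $\Ker{\cal L}\ominus\R$ with $T^*_{(p,[\up])}(U/T^n)$, the leading-order equation $H^t = 0$ is precisely $dF_{a_1,\ldots,a_n} = 0$, with the two blocks weighted by $t^2$ and $t^3$.

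A key parity observation is that the Fourier net weight in the $t^k$ contribution to $g^t_{p,\up}$ (and hence to $P\,\d^*\al_t$ and to $H^t$) is $\equiv k\pmod 2$; since only weight-$0$ modes can project onto $V^{(2)}\oplus\R$ and only weight-$\pm 1$ modes onto $V^{(1)}$, the $V^{(2)}$-block of $H^t$ sees only even powers of $t$ and the $V^{(1)}$-block only odd powers. Dividing the $V^{(2)}$-block by $t^2$ and the $V^{(1)}$-block by $t^3$ therefore produces a smooth map $\Psi:(-\ep,\ep)\times U/T^n \to T^*(U/T^n)$ with $\Psi(0,\cdot) = dF_{a_1,\ldots,a_n}$ and $\pd_t\Psi(0,\cdot) \equiv 0$. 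Non-degeneracy of $F_{a_1,\ldots,a_n}$ at $(p_0,[\up_0])$ gives invertibility of the linearisation of $\Psi(0,\cdot)$, and the implicit function theorem delivers a smooth family $(p(t),[\up(t)])$ with $\Psi(t,p(t),[\up(t)]) = 0$ and $(p(0),[\up(0)]) = (p_0,[\up_0])$. The vanishing of $\pd_t\Psi(0,\cdot)$ upgrades the standard $O(t)$ displacement bound to $|(p(t),[\up(t)]) - (p_0,[\up_0])| \le ct^2$.

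Finally, $L^t_{p(t),\up(t)}$ is the required family of embedded Hamiltonian stationary Lagrangian tori of radii $(ta_1,\ldots,ta_n)$ centred at $p(t)$, which is $T^n$-invariant by the equivariance of $f^t$; non-intersection for small $t$ is automatic because each $L^t$ lies in a ball of radius $O(t)$ around $p(t)$ and $p(t)\to p_0$. The main obstacle I anticipate is verifying carefully that the nonlinear correction $f^t_{p,\up}$ does not contaminate the $\Ker{\cal L}$-projection of $H^t$ at the critical orders $t^2$ and $t^3$, and establishing the parity claim rigorously from the weight structure in Proposition \ref{hst2prop1} and Lemma \ref{hst2lem2}; both amount to a careful accounting of $T^n$-weights in the expansion of $\Up_{p,\up}^*(g)$ and of $f^t_{p,\up}$.
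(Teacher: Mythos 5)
Your proposal is correct and follows essentially the same route as the paper's first proof: reduce to zeros of the $T^n$-equivariant map $H^t$, identify the $t^2$ (difference-mode) and $t^3$ (single-mode) coefficients of $P\,\d^*\al_t$ with the frame and base components of $dF_{a_1,\ldots,a_n}$, use the parity of the Fourier weights to rescale the two blocks by $t^2$ and $t^3$ into a map smooth at $t=0$, and apply the implicit function theorem at the non-degenerate critical point. The one step you flag as the main obstacle --- that $f^t_{p,\up}$ contributes only at $O(t^4)$ to the $\Ker{\cal L}$-projection --- is exactly what the paper's Appendix establishes, using the quantitative bound $\nm{f^t_{p,\up}}_{C^{4,\ga}}\le ct^2$ together with the self-adjointness of $\cal L$ and the orthogonality $f^t_{p,\up}\perp\Ker{\cal L}$.
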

\begin{proof}
By Theorem~\ref{hs5thm}, the problem of finding Hamiltonian
stationary Lagrangians is reduced to finding zeros of $H^t$.
Because we will change $(p,\up) \in U$, we now rewrite $\al_t$ in
\S 3  as $\al^t_{p,\up}$ to indicate its dependency. We have
 $$P^t_{p,\up} (0)=-\bigl(\sqrt{\det
(h^{t}_{p,\up})}/\sqrt{\det(h_0)}\bigr)
\,\d^*\al^t_{p,\up}=-\d^*\al^t_{p,\up}+O(t^4)$$ from \eq{hst2eq4}
and Lemma~\ref{hst2lem1}.  Thus
 $\bnm{P^t_{p,\up}
(0)}{}_{C^{k,\ga}}\le ct^2$ by Lemma~\ref{hst2lem1} again. The
Implicit Function Theorem used in the proof of
Theorem~\ref{hs5thm} will then   give
$\nm{f^t_{p,\up}}{}_{C^{k+4,\ga}}\le ct^2$. The constant $c$ may
need to be modified at different places, but we still use the same
symbol. It follows from the Appendix that \e P^t_{p,\up}
(f^t_{p,\up})=-P\, \d^*\al^t_{p,\up} +O(t^4),\label{hslteq5}\e
where $P$ is the orthogonal projection from $L^2(\Tn)$ w.r.t.
$g_0$ to $\Ker{\cal L}$. We remark that from the definition of
$P^t_{p,\up} (f)$ in \eq{hslt4eq1} we have
$$\int_{\Tn}P^t_{p,\up} (f^t_{p,\up})\,\d V_0=0.$$ Now we
need to find $(p(t),\up(t))\in U$ such that the coefficients of
$\cos \th_i, \;\sin \th_i,$  $\cos (\th_i-\th_j),$ and $\sin
(\th_i-\th_j)$ for $P^t_{p,\up} (f^t_{p,\up})$   all vanish for
\/$i,\, j=1,\ldots,n$ and \/$i\ne j$. Because $H^t$ is
$T^n$-equivariant, if $(p(t),\up(t))$ is a zero of $H^t$, so is
$(p(t),\up(t) \ci\ga)$ for any $\ga \in T^n$. But they determine
the same Hamiltonian stationary Lagrangian torus since
$f^t_{p,\up\ci\ga}\equiv f^t_{p,\up}\ci\ga$. That is, the
Hamiltonian stationary Lagrangian torus obtained is $T^n$
invariant.

 Suppose that $(p_0,[\up_0])\in U/T^n$ is a critical point of
$F_{a_1,\ldots,a_n}$. We can also consider $F_{a_1,\ldots,a_n}$ as
a function on $U$, and $(p_0,\up_0)\in U$ is its critical point.
One then has $\sum_i a_i^2 R_{i\bar{i}i\bar{i},j}(p_0)=0$ for any
$j$ by applying  variations which vary $p$ in $M$. It follows that
the $t^3$ terms of $H^t(p,\up)=P^t_{p,\up} (f^t_{p,\up})$ vanish
at $(p_0,\up_0)$ by Lemma~\ref{hst2lem2} and \eq{hslteq5}. Suppose
$a_{ij} \in \u (n),\;i< j,$ satisfy
$$a_{ij}\,e_i=e_j,\quad a_{ij}\,e_j=-e_i, \quad \text{and}\quad
a_{ij}\,e_k=0\quad \text{for}\quad k \ne i,j,$$ and $b_{ij} \in \u
(n),\;i< j,$ satisfy
$$b_{ij}\,e_i=-\sqrt{-1}e_j,\quad b_{ij}\,e_j=-\sqrt{-1}e_i, \quad \text{and}\quad
b_{ij}\,e_k=0\quad \text{for}\quad k \ne i,j.$$  Applying a
variation along $a_{ij}\in \u(n)$ at  $(p_0,\up_0)$, it yields \ea
0&=2a_i^2
R_{j\bar{i}i\bar{i}}(p_0)+2a_i^2R_{i\bar{j}i\bar{i}}(p_0)-2a_j^2R_{i\bar{j}j\bar{j}}(p_0)
-2a_j^2R_{j\bar{i}j\bar{j}}(p_0)\nonumber\\&=4\Re \bigl(
a_i^2R_{j\bar{i}i\bar{i}}(p_0)-a_j^2R_{j\bar{i}j\bar{j}}(p_0)\bigr),
\label{hst2eq19}\ea where we have used
$R_{i\bar{j}i\bar{i}}=\overline{R_{j\bar{i}i\bar{i}}}$. Similarly,
applying a variation along $b_{ij}\in \u(n)$ at $(p_0,\up_0)$, it
yields \ea 0&=-2a_i^2
\sqrt{-1}R_{j\bar{i}i\bar{i}}(p_0)+2a_i^2\sqrt{-1}R_{i\bar{j}i\bar{i}}(p_0)
-2a_j^2\sqrt{-1}R_{i\bar{j}j\bar{j}}(p_0)
+2a_j^2\sqrt{-1}R_{j\bar{i}j\bar{j}}(p_0)\nonumber\\&=4\Im \bigl(
a_i^2R_{j\bar{i}i\bar{i}}(p_0)-a_j^2R_{j\bar{i}j\bar{j}}(p_0)\bigr).
 \label{hst2eq20}\ea From Lemma~\ref{hst2lem2} and \eq{hslteq5},
the equalities \eq{hst2eq19} and \eq{hst2eq20} lead to that the
$t^2$ terms of $H^t(p,\up)=P^t_{p,\up} (f^t_{p,\up})$ vanish at
$(p_0,\up_0)$.

  We denote \ea P^t_{p,\up} (f^t_{p,\up})=&\sum_{j>i}D^t_{ij}(p,\up)\f{\cos{(\th_j-\th_i)}}{a_i
a_j}+\sum_{j>i}E^t_{ij}(p,\up)\f{\sin{(\th_j-\th_i)}}{a_i
a_j}\nonumber\\&+\sum_{j}F_j(p,\up)\f{\cos
\th_j}{a_j}+\sum_{j}G_j(p,\up)\f{\sin \th_j}{a_j},\nonumber\ea and
for $t\ne 0$ define a new map $G^t:U\ra \bigl\{f\in\Ker{\cal
L}:\int_{\Tn}f\,\d V_0=0\bigr\}$ by \ea
G^{t}(p,\up)=&-\sum_{j>i}\f{D^t_{ij}(p,\up)}{t^2}\f{\cos{(\th_j-\th_i)}}{a_i
a_j}-\sum_{j>i}\f{E^t_{ij}(p,\up)}{t^2}\f{\sin{(\th_j-\th_i)}}{a_i
a_j}\nonumber\\&+\sum_{j}\f{F_j(p,\up)}{t^3}\f{\cos
\th_j}{a_j}-\sum_{j}\f{G_j(p,\up)}{t^3}\f{\sin
\th_j}{a_j}.\nonumber\ea  From \eq{hslteq5}, Lemma~\ref{hst2lem2}
and the above discussions we have \ea
G^{t}(p,\up)=&\f{1}{2}\sum_{j>i}\bigl(\f{\pd
F_{a_1,\ldots,a_n}(p,\up)}{\pd
b_{ij}}+O(t^2)\bigr)\f{\cos{(\th_j-\th_i)}}{a_i
a_j}\nonumber\\&+\f{1}{2}\sum_{j>i}\bigl(\f{\pd
F_{a_1,\ldots,a_n}(p,\up)}{\pd
a_{ij}}+O(t^2)\bigr)\f{\sin{(\th_j-\th_i)}}{a_i
a_j}\nonumber\\&+\f{1}{2}\sum_{j}\bigl(\f{\pd
F_{a_1,\ldots,a_n}(p,\up)}{\pd y_{j}}+O(t^2)\bigr)\f{\cos
\th_j}{a_j}\nonumber\\&+\f{1}{2}\sum_{j}\bigl(\f{\pd
F_{a_1,\ldots,a_n}(p,\up)}{\pd x_{j}}+O(t^2)\bigr)\f{\sin
\th_j}{a_j}.\label{hst4eq21}\ea  In above we use the observation
that only $t$'s  odd power terms can contribute to the
coefficients of $\cos \th_j$ and $ \sin \th_j$ for $P^t_{p,\up}
(f^t_{p,\up})$ from the derivation of Proposition~\ref{hst2prop1},
Lemma~\ref{hst2lem1} and Lemma~\ref{hst2lem2}.  We can extend
$G^t$ smoothly to $t=0$ in
 \eq{hst4eq21} and consider it as a smooth map $G$ on
$[0,\ep)\t U$. Because $(p_0,[\up_0])$ is a non-degenerate
critical point of $F_{a_1,\ldots,a_n}$, it follows that
$G(0,p_0,\up_0)=0$ and the differential $dG^0\vert_{(p_0,\up_0)}$
is surjective. Noting that the subspace in $\u(n)$ perpendicular
to the Lie algebra of $T^n$ is spanned by $a_{ij}$ and $b_{ij}$,
the Implicit Function Theorem implies that there exists a smooth
function $(p(t),[\up(t)])\in U/T^n$ for small $t$ with
$(p(0),[\up(0)])=(p_0,[\up_0])$, such that $G^t(p(t),\up(t))=0$
for any representative $\up(t)$ of $[\up(t)]$. For $t\ne 0$ the
zeros of $H^t$ and $G^t$ are the same. Hence $H^t(p(t),\up(t))=0$.
Moreover, the distance between $(p(t),[\up(t)])$ and
$(p_0,[\up_0])$ is bounded by $ct^2$ and therefore the Hamiltomian
stationary Lagrangian tori with radii $(ta_1,\ldots, ta_n)$
obtained will not intersect each other for small $t$. This
completes the proof of the theorem.
\end{proof}

\section{A different proof for Theorem~\ref{hst4thm2}}
In \cite{JLS}, we define a smooth function $K^t:U\ra\R$ by \e
K^t(p,\up)=t^{-n}\Vol_g\bigl(
L_{p,\up}^t\bigr)=\Vol_{g^t_{p,\up}}\Phi (\Ga_{\d
f^t_{p,\up}}),\label{hst5eq1}\e and prove that under suitable
identification $\d K^t\vert_{(p,\up)}$ is the same as $H^t(p,\up)$
\cite[Prop. 6.2]{JLS}. Hence finding zeros of $H^t(p,\up)$ is
equivalent to finding critical points of $K^t$. Noting that $K^t$
is also smooth in $t$ and we will do the expansion of $K^t$ in $t$
to analyze the critical points of $K^t$. Recall that
$F^t_{p,\up}(f)=\Vol_{g^t_{p,\up}}\Phi (\Ga_{\d f})$ and
$F_0(f)=\Vol_{g_0}\Phi (\Ga_{\d f})$. We have
\begin{prop}
The function $K^t(p,\up)=F^t_{p,\up}(0)+O(t^4).$ That is, the
leading terms of $K^t(p,\up)$ in $t$ are the same as  the area of
$\Tn$ w.r.t. $g^t_{p,\up}$. \label{hst5prop1}
\end{prop}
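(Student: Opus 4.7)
The plan is to Taylor expand $F^t_{p,\up}$ in $f$ around $f=0$ and absorb the correction from $F^t_{p,\up}(0)$ to $F^t_{p,\up}(f^t_{p,\up})=K^t(p,\up)$ into $O(t^4)$ using the a priori bounds already available, namely $\nm{f^t_{p,\up}}_{C^{k+4,\ga}}\le ct^2$ (established in the proof of Theorem~\ref{hst4thm2}) and $\nm{P^t_{p,\up}(0)}_{C^{k,\ga}}\le ct^2$ (coming from Lemma~\ref{hst2lem1}). The key observation is that the Jacobian factor $J^{t,f}_{p,\up}$ built into \eq{hslt4eq1} makes $P^t_{p,\up}$ the $L^2$-gradient of $F^t_{p,\up}$ with respect to the \emph{fixed} reference measure $\d V_0$: pulling back $\d V_{h^{t,f}_{p,\up}}=J^{t,f}_{p,\up}\,\d V_0$ under $\Phi_f$ in the standard first-variation-of-volume formula gives
$$\ts\frac{\d}{\d s}F^t_{p,\up}(f+sg)\big|_{s=0}=\int_{\Tn}P^t_{p,\up}(f)\cdot g\,\d V_0.$$

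Integrating this identity along the ray $s\mapsto sf^t_{p,\up}$ for $s\in[0,1]$ and Taylor-expanding the integrand at $s=0$,
$$P^t_{p,\up}(sf^t_{p,\up})=P^t_{p,\up}(0)+s\,{\cal L}^t_{p,\up}(f^t_{p,\up})+O\bigl(s^2\nm{f^t_{p,\up}}_{C^{4,\ga}}^2\bigr),$$
yields
$$F^t_{p,\up}(f^t_{p,\up})-F^t_{p,\up}(0)=\int_{\Tn}P^t_{p,\up}(0)\,f^t_{p,\up}\,\d V_0+\ha\int_{\Tn}{\cal L}^t_{p,\up}(f^t_{p,\up})\cdot f^t_{p,\up}\,\d V_0+R,$$
with $|R|\le c\nm{f^t_{p,\up}}_{C^{4,\ga}}^3$; smoothness of $F^t_{p,\up}$ in $f$ together with the uniform estimates on $g^t_{p,\up}$ from Proposition~\ref{hs3prop2} make this cubic bound uniform in $(p,\up,t)$.

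Finally apply Cauchy--Schwarz to each of the first two integrals. The first is bounded by $\nm{P^t_{p,\up}(0)}_{L^2}\nm{f^t_{p,\up}}_{L^2}\le ct^4$. For the second, Proposition~\ref{hs4prop} provides a uniform bound on ${\cal L}^t_{p,\up}:C^{4,\ga}(\Tn)\to C^{0,\ga}(\Tn)$, so $\nm{{\cal L}^t_{p,\up}f^t_{p,\up}}_{L^2}\le c\nm{f^t_{p,\up}}_{C^{4,\ga}}\le ct^2$ and the integral is again $O(t^4)$; the cubic remainder is $O(t^6)$. Summing these gives $K^t(p,\up)-F^t_{p,\up}(0)=O(t^4)$, which is the desired conclusion.

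The main conceptual point --- not really hard, but easy to muddle --- is the choice of reference measure: working directly with $-\d^*\al_{H_f}$ paired against the induced volume $\d V_{h^{t,f}_{p,\up}}$ would couple the variational pairing to a $t$- and $f$-dependent measure and obstruct the uniform estimates, whereas the Jacobian $J^{t,f}_{p,\up}$ in \eq{hslt4eq1} absorbs this dependence and lets us pair everything against the single fixed $\d V_0$ for which the needed bounds on $P^t_{p,\up}(0)$, ${\cal L}^t_{p,\up}$ and $f^t_{p,\up}$ are already in hand.
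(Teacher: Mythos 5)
Your proof is correct, but it follows a genuinely different route from the paper's. The paper pulls the induced metric on $\Phi(\Ga_{\d f^t_{p,\up}})$ back to $\Tn$, expands $\sqrt{\det h}$ for $h=h_0+t^2h_2+t^3h_3+O(t^4)$, observes that the contributions of the graph function and of the metric perturbation to $h_2,h_3$ decouple --- giving $K^t(p,\up)=F_0(f^t_{p,\up})+F^t_{p,\up}(0)-F_0(0)+O(t^4)$ --- and then disposes of $F_0(f^t_{p,\up})-F_0(0)$ using the \emph{exact} Hamiltonian stationarity of $\Tn$ with respect to $g_0$ together with $\nm{f^t_{p,\up}}=O(t^2)$. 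You instead Taylor-expand $F^t_{p,\up}$ itself to second order about $f=0$ via the variational identity defining its Euler--Lagrange operator $P^t_{p,\up}$, and kill the first-order term by pairing the \emph{approximate} stationarity $\nm{P^t_{p,\up}(0)}_{C^{0}}\le ct^2$ against $\nm{f^t_{p,\up}}_{C^{0}}\le ct^2$; the quadratic and cubic terms are then $O(t^4)$ and $O(t^6)$ by the uniform bounds on ${\cal L}^t_{p,\up}$ from Proposition~\ref{hs4prop} and on the second $f$-derivative of $P^t_{p,\up}$ (the analogue of \eq{apeq3} in the Appendix). Your version avoids the decoupling observation entirely (which the paper states only informally), uses nothing about the structure of $h_2,h_3$ beyond a priori estimates, and so applies verbatim to any compact Hamiltonian stationary Lagrangian in $\C^n$; the paper's version is more explicit, and its determinant expansion is immediately reused in Proposition~\ref{hst5prop2} to extract the curvature coefficient. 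One attribution point: the bound $\nm{f^t_{p,\up}}_{C^{k+4,\ga}}\le ct^2$ you invoke is really established in the Appendix (equation \eq{apeq4}), from Lemma~\ref{hst2lem1} and the Implicit Function Theorem in the construction of Theorem~\ref{hs5thm}; sourcing it from the proof of Theorem~\ref{hst4thm2} would make the alternative proof of that theorem in \S 5 appear circular, though no actual circularity arises.
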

\begin{proof}

We pull back the induced metric of $g^t_{p,\up}$ on $\Phi (\Ga_{\d
f^t_{p,\up}})$ to $\Tn$, and denote it as
$h=h_0+t^2h_2+t^3h_3+O(t^4)$ from \eq{hst3eq1}, where $h_0$ is the
induced metric of $g_0$ on $\Tn$ and consider
$h,\,h_0,\,h_2,\,h_3$  as matrices. A direct computation gives \ea
\det(h)&=\det \bigl(h_0+t^2h_2+t^3h_3+O(t^4)\bigr)
\nonumber\\&=\det(h_0)\det
\bigl(I+t^2h_0^{-1}h_2+t^3h_0^{-1}h_{3}+O(t^4)\bigr),\label{hst5eq2}\ea
and $$\sqrt{\det(h)}= \sqrt{\det(h_0)}\bigl(1+\f{1}{2}t^2
\mathop{\rm Tr}(h_0^{-1}h_2)+\f{1}{2}t^3\mathop{\rm
Tr}(h_0^{-1}h_{3})+O(t^4)\bigr),$$ where $h_0^{-1}$ is the inverse
matrix of $ h_0$. We thus have \ea
K^t(p,\up)&=F^t_{p,\up}(f^t_{p,\up})=\Vol_{g^t_{p,\up}}\Phi
\bigl(\Ga_{\d f^t_{p,\up}}\bigr)\nonumber\\&=
\int_{\Tn}\bigl(1+\f{1}{2}t^2 \mathop{\rm
Tr}(h_0^{-1}h_2)+\f{1}{2}t^3\mathop{\rm
Tr}(h_0^{-1}h_{3})+O(t^4)\bigr)\,\d
V_0\nonumber\\&=F_0(f^t_{p,\up})+F^{t}_{p,\up}(0)-F_0(0)+O(t^4).\label{hst5eq3}\ea
In the last equality, we use the observation that from
\eq{hst3eq1}  and \eq{apeq4}  one can totally separate the
contribution of $f^t_{p,\up}$ and $g^t_{p,\up}$ to $h_2$ and
$h_3$, and then do the same expansion for $F_0(f^t_{p,\up})$ and
$F^{t}_{p,\up}(0)$. On the other hand, we have \e
F_0(f^t_{p,\up})=F_0(0)+\f{\d}{\d
s}F_0(sf^t_{p,\up})\big\vert_{s=0}+O(t^4)=F_0(0)+O(t^4),\label{hst5eq4}\e
where we have used \eq{apeq4} in the first equality and $\Tn$ is
Hamiltonian stationary w.r.t. $g_0$ in the second one. Plugging
\eq{hst5eq4} into \eq{hst5eq3}, we get
$K^t(p,\up)=F^{t}_{p,\up}(0)+O(t^4)$ as desired.
\end{proof}
\begin{rem}
Proposition~\ref{hst5prop1} in fact not only works  for $\Tn$, but
also for all compact Hamiltonian stationary Lagrangians $L$ in
$\C^n$. To identify $dK^t$ with $H^t$ we need $L$ to be
Hamiltonian rigid.
\end{rem}
\begin{prop}
Further expansion gives
$$K^t(p,\up)=\bigl(1-\f{1}{4}t^2\sum_i^n
a_i^2R_{i\bar{i}i\bar{i}}(p)\bigr)\Vol_{g_0}(\Tn)+O(t^4).$$
\label{hst5prop2}\end{prop}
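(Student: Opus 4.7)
By Proposition~\ref{hst5prop1}, $K^t(p,\up)=F^t_{p,\up}(0)+O(t^4)$, so it suffices to expand the unperturbed volume $F^t_{p,\up}(0)=\Vol_{g^t_{p,\up}}(\Tn)$ up to order $t^3$. The plan is to start from the explicit expansion \eq{hst2eq4} of the induced metric $h_t=h^t_{p,\up}$ on $\Tn$, write $h_t=h_0+t^2h_2+t^3h_3+O(t^4)$ with $(h_2)_{ij}=-a_ia_j\Re A_{ij}$ and $(h_3)_{ij}=-a_ia_j\Re C_{ij}$, and expand the volume density as
\e
\sqrt{\det h_t/\det h_0}=1+\tfrac{t^2}{2}\tr(h_0^{-1}h_2)+\tfrac{t^3}{2}\tr(h_0^{-1}h_3)+O(t^4).
\label{propeq1}\e
Since $h_0=\diag(a_1^2,\ldots,a_n^2)$ in the basis $\d\th_i$, a direct calculation gives $\tr(h_0^{-1}h_2)=-\sum_i\Re A_{ii}$ and $\tr(h_0^{-1}h_3)=-\sum_i\Re C_{ii}$.

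The heart of the argument is then a Fourier/orthogonality computation on $\Tn$. Substituting the formula \eq{hst2eq3} restricted to $\Tn$ (so $r_p=a_p$),
\[
A_{ii}=\tfrac{1}{2}\sum_{p,q}R_{p\bar{i}q\bar{i}}(p)\,a_pa_q\,e^{\sqrt{-1}(\th_p+\th_q-2\th_i)},
\]
the integral over $\Tn$ of each exponential $e^{\sqrt{-1}\sum_k c_k\th_k}$ vanishes unless every $c_k=0$. For the $A_{ii}$ exponent one has $c_k=\de_{kp}+\de_{kq}-2\de_{ki}$, which forces $p=q=i$, and the sole surviving term contributes $\tfrac{1}{2}a_i^2 R_{i\bar{i}i\bar{i}}(p)$. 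Summing over $i$ and using that the holomorphic sectional curvature is real, one obtains
\e
\int_{\Tn}\tr(h_0^{-1}h_2)\,\d V_0=-\tfrac{1}{2}\sum_{i}a_i^2R_{i\bar{i}i\bar{i}}(p)\,\Vol_{g_0}(\Tn).
\label{propeq2}\e

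For the $t^3$ coefficient I would apply the same orthogonality test to $C_{ii}$. Each exponent appearing in \eq{hst2eq3} has the form $\th_p+\th_q-2\th_i\pm\th_m$; the corresponding integer vector $(c_1,\ldots,c_n)$ has coordinate sum $\pm1$, so it can never be the zero vector. Hence every term in $\int_{\Tn}\Re C_{ii}\,\d V_0$ averages to zero, and the $t^3$ contribution to $F^t_{p,\up}(0)$ vanishes identically. Combining this with \eq{propeq1} and \eq{propeq2} yields
\[
F^t_{p,\up}(0)=\Bigl(1-\tfrac{t^2}{4}\sum_{i=1}^{n}a_i^2R_{i\bar{i}i\bar{i}}(p)\Bigr)\Vol_{g_0}(\Tn)+O(t^4),
\]
and Proposition~\ref{hst5prop1} gives the same expansion for $K^t(p,\up)$.

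The only delicate point is the $t^3$ vanishing, but it reduces to the elementary parity observation that an odd number of $z$'s versus $\bar z$'s cannot produce a $T^n$-invariant (i.e.\ constant) function on $\Tn$, which is exactly the same mechanism that makes the linear term of the Hamiltonian stationary equation at $t=0$ consist of odd-frequency Fourier modes. Everything else is bookkeeping of the expansion \eq{hst2eq4} and standard use of the determinant expansion.
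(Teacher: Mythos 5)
Your proposal is correct and follows essentially the same route as the paper: expand $\sqrt{\det h_t}$ via the trace of $h_0^{-1}h_2$ and $h_0^{-1}h_3$, then kill all non-constant Fourier modes of $\Re A_{ii}$ and $\Re C_{ii}$ by integrating over $\Tn$, leaving only the $p=q=i$ term $\tfrac12 a_i^2R_{i\bar{i}i\bar{i}}(p)$ at order $t^2$ and nothing at order $t^3$. The only difference is that you make the orthogonality/parity check explicit where the paper simply asserts that the trigonometric terms average to zero.
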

\begin{proof}
 The induced
metric $h^t_{p,\up}$ of $g^t_{p,\up}$ on $\Tn $ is given in
\eq{hst2eq4} and the inverse matrix $h_0^{-1}$
 has entries $h_0^{ij}=\f{\de _{ij}}{a_i^2}$. Combining \eq{hst2eq4} and
a similar discussion as in \eq{hst5eq3}, we get \e
F^{t}_{p,\up}(0)=F_0(0)-\f{1}{2}\int_{\Tn}\sum_{i}^n\bigl(t^2\Re
A_{ii}+t^3\Re C_{ii}\bigr)\,\d V_0+O(t^4),\label{hst5eq5}\e where
$\Re A_{ii}$ and $\Re C_{ii}$ are as in \eq{hst2eq3}. Noting that
the integration of $\cos$ and $\sin$ function on $\Tn$ will
vanish, only the terms involving $R_{i\bar{i}i\bar{i}}(p)$ remain.
We thus have $F^{t}_{p,\up}(0)=\bigl(1-\f{1}{4}t^2\sum_i^n
a_i^2R_{i\bar{i}i\bar{i}}(p)\bigr)F_0(0)+O(t^4).$ Combining with
Proposition~\ref{hst5prop1}, it follows that \e
K^t(p,\up)=\bigl(1-\f{1}{4}t^2\sum_i^n
a_i^2R_{i\bar{i}i\bar{i}}(p)\bigr)F_0(0)+O(t^4).\label{hst5eq6}\e
This complete the proof by replacing $F_0(0)$ by
$\Vol_{g_0}(\Tn)$.
\end{proof}
\begin{proof}[Proof of Theorem~\ref{hst4thm2}]
Noting that both $K^t$ and $\sum_i^n a_i^2R_{i\bar{i}i\bar{i}}(p)$
are invariant under $T^n$ action, we now {\bf consider $K^t$ as a
map from $U/T^n$ to $\R$}. We have $K^0(p,[\up])\equiv F_0(0)$ and
every $(p,[\up])$ is a critical point of $K^0$. Suppose that
$(p_0,[\up_0])$ is a non-degenerate critical point of
$F_{a_1,\ldots,a_n}(p,[\up])=\sum_i^n
a_i^2R_{i\bar{i}i\bar{i}}(p)$. We particular look $K^0(p,[\up])$
at $(p_0,[\up_0])$. It follows from the Implicit Function Theorem
that there exists a smooth function $(p\,(t),[\up(t)])\in U/T^n$
for small $t$ with $(p\,(0),[\up(0)])=(p_0,[\up_0)]$, such that
$(p\,(t),[\up(t)])$ is a critical point of $K^t$. Moreover,  the
distance between $(p\,(t),[\up(t)])$ and $(p_0,[\up_0)]$ in
$U/T^n$ is  bounded by $ct^2$ from \eq{hst5eq6}. Therefore, the
Hamiltomian stationary Lagrangian tori with radii $(ta_1,\ldots,
ta_n)$ obtained are $T^n$ invariant and will not intersect each
other for small $t$. This completes the proof of the theorem.
\end{proof}
\appendix \section*{Appendix}
Here we give a supplement to \eq{hslteq5}. Rewrite $\d^*\al_{H_f}$
in \eq{hslt4eq1} as $\d^*\al(t,p,\up,f)$ to indicate its
dependency.  We have \begin{equation}
\begin{aligned} &\;\d^*\al(t,p,\up,f)\\=&\;\d^* \al ,
(t,p,\up,0)+\int_0^1 \f{d}{ds}\d^* \al (t,p,\up,sf) \,ds\\=&\;\d^*
\al (t,p,\up,0)+\int_0^1 (\d^* \al)_{\psi} (t,p,\up,sf)f
\,ds\\=&\;\d^* \al (t,p,\up,0)+\int_0^1 \bigl(-{\cal
\bar{L}}^t_{p,\up}f+\int_0^1 \f{d}{du}\bigl((\d^* \al)_{\psi}
(t,p,\up,usf)f\bigr)\,du \bigr)\,ds\\=&\;\d^* \al
(t,p,\up,0)-{\cal \bar{L}}^t_{p,\up}f+\int_0^1 \int_0^1 (\d^*
\al)_{\psi \psi} (t,p,\up,usf)sf^2\,du \,ds, \label{apeq1}
\end{aligned}
\end{equation}
where  ${\cal \bar{L}}^t_{p,\up}f= -(\d^* \al)_{\psi}
(t,p,\up,0)f$ is as \eq{hst21eq6} computed w.r.t. $g^t_{p,\up}$.
Denote \e{\cal Q}^t_{p,\up}(f)=\int_0^1 \int_0^1 (\d^* \al)_{\psi
\psi} (t,p,\up,usf)sf^2\,du \,ds.\label{apeq2}\e The induced
metric of
 $g_{p,\up}^t$ on $\Tn$ is uniformly bounded from
 Proposition~\ref{hs3prop2},
 and so is $\d^* \al (t,p,\up,0)$. If the norm of $f$ is small, it will not
 change the induced metric too much either. More precisely,  we
  have $\bnm {\d^* \al (t,p,\up,f)}_{k,\ga}$ bounded if $\nm f
  _{k+4,\ga}$ is small. Similarly, we have
  $\bnm {(\d^* \al)_{\psi \psi} (t,p,\up,usf)}_{k,\ga}$ bounded for $0\le s\le 1$ and $0\le u\le 1$.
  Thus \eq{apeq2} gives
  \begin{equation}
  \bnm {{\cal Q}^t_{p,\up}(f)}_{k,\ga}\le c\nm {f}_{k+4,\ga}^2.
  \label{apeq3}
  \end{equation}
By Lemma~\ref{hst2lem1}, we have $\bnm{\d^* \al
(t,p,\up,0)}{}_{C^{k,\ga}}=O(t^2$) for any integer $k\ge 0$ and
$\ga\in(0,1)$. It leads to \e \nm{f^t_{p,\up}}{}_{C^{k+4,\ga}}\le
ct^2 \label{apeq4}\e from the Implicit Function Theory in the
construction in Theorem~\ref{hs5thm}. And thus
 $\d^*\al(t,p,\up,f^t_{p,\up})=O(t^2)$ from  \eq{apeq1}, \eq{hst21eq6},
 \eq{apeq3} and \eq{apeq4}. In \eq{hslt4eq1}, we have $J^{t,f}_{p,\up} =1+O(t^2)$
 from \eq{hst2eq2} and \eq{apeq4}. Therefore, \eq{hslt4eq1} gives
$$P^t_{p,\up}(f^t_{p,\up})=-\d^*\al(t,p,\up,f^t_{p,\up})+O(t^4)
=-P\d^*\al(t,p,\up,f^t_{p,\up})+O(t^4)$$ since $P^t_{p,\up}
(f^t_{p,\up})$ is indeed in $\Ker{\cal L}$ by
Theorem~\ref{hs5thm}.

Combining \eq{apeq3} and \eq{apeq4}, it gives ${\cal
Q}^t_{p,\up}(f^t_{p,\up})=O(t^4)$. Noting  that ${\cal
\bar{L}}^0_{p,\up}={\cal L}$, by Proposition~\ref{hs3prop2} and
\eq{hst21eq6} we have \e \bnm{{\cal \bar{L}}^t_{p,\up}(f)-{\cal
L}(f)}{}_{C^{k,\ga}}\le ct^2 \nm{f}_{C^{k+4,\ga}}.\label{apeq5}\e
Since $\cal L$ is a self-adjoint operator,
 ${\cal
L}(f^t_{p,\up})$ is perpendicular to $\Ker{\cal L}$. We then have
$P{\cal \bar{L}}^t_{p,\up}(f^t_{p,\up})=O(t^4)$ by \eq{apeq4} and
\eq{apeq5}. Therefore, $$P^t_{p,\up} (f^t_{p,\up})=-P\,
\d^*\al(t,p,\up,0) +O(t^4),$$ which is \eq{hslteq5} by denoting
$\d^*\al(t,p,\up,0)$ as $\d^*\al^t_{p,\up}$.


\begin{thebibliography}{99}
\bibitem{AmOh} A. Amarzaya and Y. Ohnita,  {\it `Hamiltonian
stability of parallel Lagrangian submanifolds in complex space
forms'}, preprint.

\bibitem{BuCo} A. Butscher and J. Corvino, {\it `Hamiltonian
stationary tori in K\"ahler manifolds'}, arXiv:0811.2829, 2008.

\bibitem{HR1} F. H\'{e}lein and P. Romon, {\it `Hamiltonian
stationary surfaces in $\C^2$'}, Comm. Anal. Geom. 10 (2002), no.
1, 79--126.

\bibitem{HR2} F. H\'{e}lein and P. Romon, {\it `Hamiltonian
stationary tori in the complex projective plane'}, Proc. London.
Math. Soc. (3) 90 (2005), no. 2, 472--496.

\bibitem{Joyc2} D. Joyce, {\it `Special Lagrangian submanifolds with
isolated conical singularities. I. Regularity'}, Ann. Global Anal.
Geom. 25 (2004), 201--251. math.DG/0211294.

\bibitem{JLS}  D. Joyce, Y.-I Lee and R. Schoen
{\it `On the existence of Hamiltonian stationary Lagrangian
submanifolds in symplectic manifolds'}, \hfil\break
arXiv:0902.3338, 2009.




\bibitem{McSa} D. McDuff and D. Salamon, {\it `Introduction to
Symplectic Topology'}, second edition, OUP, Oxford, 1998.

\bibitem{Mose} J.K. Moser, {\it `On the volume elements on
manifolds'}, Trans. Amer. Math. Soc. 120 (1965), 280--296.

\bibitem{Oh1} Y.-G. Oh, {\it `Second variation and stabilities of
minimal Lagrangian submanifolds in K\"ahler manifolds'}, Invent.
math. 101 (1990), 501--519.

\bibitem{Oh2} Y.-G. Oh, {\it `Volume minimization of Lagrangian
submanifolds under Hamiltonian deformation'}, Math. Z. 212 (1993),
175--192.

\bibitem{Tian} G. Tian, {\it `Canonical Metrics in K\"ahler Geometry'},
 Lectures in Mathematics ETH Zurich. Birkhauser Verlag, Basel,
2000.

\bibitem{Ye} R. Ye, {\it `Foliation by Constant Mean Curvature Spheres'},
Pacific J. 147 (1991), no. 2, 381--396.

\end{thebibliography}
\end{document}